\newtheorem{theo}{Theorem}[section]
\newtheorem{prop}{Proposition}[section]
\newtheorem{cor}{Corollary}[section]
\theoremstyle{definition}
\newtheorem{rem}{Remark}[section]
\numberwithin{equation}{section}
\newcommand{\norm}[1]{\left\Vert#1\right\Vert}
\newcommand{\R}{\mathbb R}
\newcommand{\W}{\mathcal W}
\newcommand{\de}{\partial}
\newcommand{\eps}{\varepsilon}
\DeclareMathOperator{\divergenza}{div}
\DeclareMathOperator{\sign}{sign}
\begin{document}
\title[Anisotropic elliptic equations]{Anisotropic elliptic equations
  with general growth in the gradient and Hardy-type potentials}
\author[F. Della Pietra, N. Gavitone]{Francesco Della Pietra and
  Nunzia Gavitone}
\address{Francesco Della Pietra \\
Universit\`a degli studi del Molise \\
Dipartimento di Bioscienze e Territorio \\
Via Duca degli Abruzzi \\
86039 Termoli (CB), Italia.
}
\email{francesco.dellapietra@unimol.it}

\address{Nunzia Gavitone \\
Universit\`a degli studi di Napoli ``Federico II''\\
Dipartimento di Matematica e Applicazioni ``R. Caccioppoli''\\
80126 Napoli, Italia.
}
\email{nunzia.gavitone@unina.it}
\keywords{Nonlinear elliptic boundary value problems, Hardy
  inequalities, a priori estimates, convex symmetrization}
\subjclass{35J65,35B45}
\date{\today}
\maketitle
\begin{abstract}
In this paper we give existence and regularity results for the
solutions of problems whose prototype is
\begin{equation*}
  \left\{
    \begin{array}{ll}
      -\mathcal Q v= \beta(|v|) H(Dv) ^ q +
      \dfrac{\lambda}{H^o(x)^p} |v|^{p-2} v + f(x) &
      \text{in }\Omega, \\ [.2cm]
      v = 0 & \text{on }\de\Omega,
    \end{array}
  \right.
\end{equation*}
with $\Omega$ bounded domain of $\R^N$, $N\ge 2$, $0<p-1<q\le p<N$,
$\beta$ is a nonnegative continuous function and $\lambda \ge
0$. Moreover, $H$ is a general norm of $\R^N$, $H^o$ is its polar and
$\mathcal Q v:= \sum_{i=1}^{N} \frac{\de}{\de x_i}
\big(H(Dv)^{p-1}H_{\xi_i}(Dv)\big)$. 
\end{abstract}

\section{Introduction}

In the present paper we study existence and regularity results for
Dirichlet problems which involve a class of nonlinear elliptic
operators in divergence form, under the influence of lower-order
terms. Given a function $H\colon \R^N\rightarrow
[0,+\infty[$, $N\ge 2$, convex, $1$-homogeneous and in
$C^1(\R^N\setminus\{ 0 \})$, we deal with operators whose prototype is
the following: 
\begin{equation}
  \label{intro:p=2}
  \mathcal Q v:= \sum_{i=1}^{N} \frac{\de}{\de x_i}
  \big(H(Dv)^{p-1}H_{\xi_i}(Dv)\big),
\end{equation}
with $1<p<N$. In general, $\mathcal Q $ is highly nonlinear, and
extends some well-known classes of operators. In particular, for
$H(\xi)= ( \sum_k |\xi_k|^r )^\frac{1}{r}$, $r>1$, $\mathcal Q$ becomes
\begin{equation*}
  \mathcal Q v= \sum_{i=1}^{N} \frac{\de}{\de x_i} \left(
    \left( \sum_{k=1}^{N} \left| \frac{\de v}{\de x_k} \right|^r
    \right)^{(p-r)/r} \left| \frac{\de v}{\de x_i}
    \right|^{r-2}\frac{\de v}{ \de x_i}\right).
\end{equation*}
Note that for $r=2$, it coincides with the usual $p$-Laplace
operator, while for $r=p$ it is the so-called pseudo-$p$-Laplace
operator.

This kind of operator has been studied in several papers
(see for instance \cite{aflt}, \cite{ciasal}, \cite{dpg2},
\cite{dpg3}, \cite{ferkaw} for $p=2$, and \cite{bfk}, \cite{bkj06},
\cite{kn08} for $1<p<\infty$).

The aim of this paper is to study a class of equations whose prototype
involves in its principal part the operator \eqref{intro:p=2}, and 
a Hardy-type potential. Moreover, we are also interested in the
influence of a lower-order term depending on the gradient. The
problems we deal with are modeled on the following: 
\begin{equation}\label{eq:radH}
  \left\{
    \begin{array}{ll}
      -\mathcal Q v= \beta(|v|) H(Dv) ^ q +
      \dfrac{\lambda}{H^o(x)^p} |v|^{p-2} v + f(x) &
      \text{in }\Omega, \\ [.2cm]
      v = 0 & \text{on }\de\Omega,
    \end{array}
  \right.
\end{equation}
with $\Omega$ bounded domain of $\R^N$, with $0\in\Omega$, $N\ge 2$, $1<p<N$, 
$p-1<q\le p$, $\beta$ is a nonnegative continuous function, $\lambda
\ge 0$ and $f$ a measurable function on whose summability we will make
different assumptions. Moreover, we denote with $H^o$ the
polar function of $H$ (see Section 2 for the precise definition). 

When $H(\xi)=|\xi|$, the general problem \eqref{eq:radH} reduces to 
\begin{equation}\label{eq:plap}
  \left\{
    \begin{array}{ll}
      -\Delta_p v= \beta(|v|) |Dv| ^ q +
      \dfrac{\lambda}{|x|^p} |v|^{p-2} v + f(x) &
      \text{in }\Omega, \\ [.2cm]
      v= 0 & \text{on }\de\Omega.
    \end{array}
  \right.
\end{equation}
Equations like \eqref{eq:plap} have been widely studied in literature
either  in the case $\lambda=0$ or when
$\beta=0$.

In the case $\lambda=0$, it is well-known that for a
general continuous function $\beta$, a smallness assumption on
some norm of $f$ is needed in order to have existence results (see,
for example, \cite{fm98,fm00,fermes,gmp06,gmp12} for $\beta\equiv 1$, or
\cite{abddaper,dp,hmv99,mps,tr03} in the general case). 
Moreover, under some appropriate hypotheses on the function $\beta$,
it is possible to remove the smallness condition of $f$ (see
\cite{bst01,sp06}).

In the case $\beta=0$, the existence of a solution of \eqref{eq:plap}
can be proved under the assumption of $\lambda\le \Lambda_N$ (see
\cite{gaper}), where $\Lambda_N$ denotes the best constant in the
classical Hardy inequality. Moreover, if $p=2$ in
\cite{bop} some regularity results are proved. Surprisingly, the
regularity of the solutions also depend on the size of $\lambda$. 

As matter of fact, the influence of both terms in the right-hand side
of \eqref{eq:plap} has been studied in \cite{app07,abdper07} in
the case $\beta$ is a positive constant. In such papers, some
existence and nonexistence results are proved. In particular, it is
shown that when $p=q$ there is no positive solution, even in a very weak
sense, when $f>0$ and $\lambda >0$.

Recently, in \cite{lmp} the authors study problems whose model is
\eqref{eq:plap} with $q=p$ and $\beta$ nonconstant, giving some
existence and regularity results. More precisely, they prove that
under a structural assumption on $\beta$, which involves its behavior
at infinity, if $f\in L^{m}(\Omega)$, $m>1$ there exists a solution of
\eqref{eq:plap} whose regularity depends on $m$ and on the size of
$\lambda$. 

As regards the general problem \eqref{eq:radH}, in \cite{dpg3} we
investigated the particular case with $\beta\equiv 0$ and $p=2$, namely  
\begin{equation}
  \label{intro:mod}
  \left\{
    \begin{array}{ll}
      -\mathcal Q v = \dfrac{\lambda}{H^o(x)^2}v + f(x) &\text{in }
      \Omega,\\[.1cm]
      v=0 &\text {on }\de \Omega.
    \end{array}
  \right.
\end{equation}
Here $\Omega$ is a bounded open set of $\R^N$,
$N\ge 3$, containing the origin, and $\lambda$ is a nonnegative
constant. We studied the existence and the 
regularity of the solutions of \eqref{intro:mod} with respect to the
summability of $f$, chosed in suitable Lorentz spaces, and the size of
$\lambda$.

Our purpose is to study problem \eqref{eq:radH} for a
general $\beta\ge 0$ and $p-1<q\le p$. In particular, the
novelties of the paper relies in two main topics. First, using
symmetrization techniques we are able to fully analyse the case $q<
p$ that, up to our knowledge, also in the Euclidean case
has been studied only in particular cases. 
Second, taking into account the structure of the operator, we use a
suitable symmetrization argument, involving the so-called convex
symmetrization (see \cite{aflt}, and Section 2 for the definition),
which allows to obtain optimal results (see Remark \ref{opth}).

To study problem \eqref{eq:radH}, we investigate the existence and
regularity issues by choosing $f$ in appropriate Lorentz spaces. Under
suitable assumptions on $\beta$, we find a critical value of
$\lambda$, which depends on $\beta$ and on the 
summability of $f$, such that a solution of \eqref{eq:radH}
exists. Moreover, we prove that the obtained solution  and its
gradient belong to suitable Lorentz spaces (see Section 3, Theorems
\ref{theo:deb}, \ref{theo:entr}).  

As usual, a key role is played by uniform estimates of the solutions
of appropriate approximating problems (see Section 4), obtained by
means of the quoted convex symmetrization.

For ease of reading, we state the main results in Section 3, adding
some comments and remarks. Their proofs are contained in sections 4
and 5.


\section{Notation and preliminaries}
Let $N\ge 2$, and $H:\R^N\rightarrow [0,+\infty[$ be a $C^1(\mathbb
R^N\setminus\{0\})$ function such that
\begin{equation}\label{eq:omo}
  H(t\xi)= |t| H(\xi), \quad \forall \xi \in \R^N,\; \forall t \in
  \R.
\end{equation}
Moreover, suppose that there exist two positive constants $c_1
\le c_2$ such that
\begin{equation}\label{eq:lin}
  c_1|\xi| \le H(\xi) \le c_2|\xi|,\quad \forall \xi\in \R^N.
\end{equation}

The polar function $H^o\colon \R^N\rightarrow [0,+\infty[$
of $H$ is defined as
\[
H^o(v)\colon=\sup_{\xi \ne 0} \frac{\xi\cdot v}{H(\xi)}.
\]
It is easy to verify that also $H^o$ is a convex function
which satisfies properties \eqref{eq:omo} and
\eqref{eq:lin}. Furthermore,
\[
H(v)=\sup_{\xi \ne 0} \frac{ \xi \cdot v}{H^o(\xi)}.
\]
The set
\[
\mathcal W = \{  \xi \in \R^N \colon H^o(\xi)< 1\}.
\]
is the so-called Wulff shape centered at the origin. We put
$\kappa_N=|\mathcal W|$, and denote $\mathcal W_r=r\mathcal W$.

In the following, we often make use of some well-known properties of
$H$ and $H^o$:
\begin{gather*}
  H(D H^o(\xi))=H^o(D H(\xi))=1,\quad \forall \xi \in
  \R^N\setminus \{0\},
  \\
  H^o(\xi) D H(D H^o(\xi) ) = H(\xi) D H^o(D H(\xi) ) = \xi,\quad
  \forall \xi \in \R^N\setminus \{0\}. 
\end{gather*}

Let $\Omega$ be an open subset of $\mathbb R^N$. The total variation
of a function $u\in BV(\Omega)$ with respect to $H$ is (see \cite{ab}):
\[
\int_\Omega |Du|_H = \sup\left\{ \int_\Omega u\divergenza \sigma dx\colon
  \sigma \in C_0^1(\Omega;\R^N),\; H^o(\sigma)\le 1 \right\}.
\]
This yields the following definition of anisotropic perimeter of
$F\subset \R^N$ in $\Omega$:
\[
P_H(F;\Omega) = \int_\Omega |D\chi_F|_H= \sup\left\{ \int_F
  \divergenza \sigma dx\colon \sigma \in C_0^1(\Omega;\R^N),\;
  H^o(\sigma)\le 1 \right\}.
\]
The following co-area formula for the anisotropic perimeter
\begin{equation}\label{fr}
  \int_{\{u>t\}} H(Du) dx = \int_\Omega P_H (\{u>s\},\Omega)\, ds,\quad
  \forall u\in BV(\Omega)
\end{equation}
holds, moreover
\[
P_H(F;\Omega)= \int_{\Omega\cap \partial^*F} H(\nu_F) d\mathcal H^{N-1}
\]
where $\mathcal H^{N-1}$ is the $(N-1)-$dimensional Hausdorff measure
in $\mathbb R^N$, $\partial^*F$ is the reduced boundary of $F$ and
$\nu_F$ is the outer normal to $F$ (see \cite{ab}).

The anisotropic perimeter of a set $F$ is  finite if and only if the
usual Euclidean perimeter
\[
P(F;\Omega)=  \sup\left\{ \int_F \divergenza \sigma dx\colon
  \sigma \in C_0^1(\Omega;\R^N),\; |\sigma|\le 1 \right\}.
\]
is finite. Indeed, by properties \eqref{eq:omo} and \eqref{eq:lin} we
have that
\begin{equation}\label{eq:lin2}
  \frac{1}{c_2} |\xi| \le H^o(\xi) \le \frac{1}{c_1} |\xi|,
\end{equation}
and then
\begin{equation*}
  c_1 P(E;\Omega) \le P_H(E;\Omega) \le c_2 P(E;\Omega).
\end{equation*}
A fundamental inequality for the anisotropic perimeter is the
isoperimetric inequality
\begin{equation}\label{isop}
  P_H(E;\R^N) \ge N \kappa_N^{\frac 1 N} |E|^{1-\frac 1 N},
\end{equation}
which holds for any measurable subset $E$ of $\R^N$ (see for instance
\cite{aflt}).

Finally, if $u\in W^{1,1}(\Omega)$, then (see \cite{ab})
\[
\int_{\Omega} |Du|_H =\int_\Omega H(Du) dx.
\]

\subsection{Rearrangements, convex symmetrization and Lorentz spaces}
We recall some basic definition on rearrangements.
Let $\Omega$ be an bounded open set of $\R^N$,
$u:\Omega\rightarrow\R$ be a measurable function, and denote with
$|\Omega|$ the Lebesgue measure of $\Omega$.

The {distribution function} of $u$ is the map
$\mu_u:\mathbb R \rightarrow[0,\infty[$ defined by
\begin{equation*}
  \mu_u(t)\,=\,|\{x\in\Omega:|u(x)|>t\}|.
\end{equation*}
Such function is decreasing and right continuous.

The {decreasing rearrangement} of $u$ is the map
$u^*:\,[0,\infty[\rightarrow \R$ defined by
\begin{equation*}
  u^*(s):=\sup\{t\in\R:\mu_u(t)>s\}.
\end{equation*}
The function $u^*$ is the generalized inverse of $\mu_u$.

Following \cite{aflt}, the convex symmetrization of $u$ is the
function $u^\star(x)$, $x\in \Omega^\star$ defined by:
\begin{equation*}
  u^\star(x)=u^*(\kappa_N H^o(x)^N),
\end{equation*}
where $\Omega^\star$ is a set homothetic to the Wulff shape centered
at the origin having the same measure of $\Omega$, that is,
$\Omega^*=\mathcal W_R$, with $R=\big(\frac{|\Omega|}
{\kappa_N}\big)^{1/N}$.

The inequalities stated below will be useful in the next sections.
\begin{prop}
  Suppose $\lambda>0$, $1\le \gamma <+\infty$. Let $\psi$ a
  nonnegative measurable function on $(0,+\infty)$. The following
  inequalities hold:
 \begin{equation}\label{Hardy1d2}
    \int_0^{+\infty} \left(t^\lambda\int_t^{+\infty}
      \psi(s)ds\right)^\gamma \frac{dt}{t}
    \le \lambda^{-\gamma} \int_0^{+\infty}(t^{1+\lambda}\psi
    (t))^\gamma \frac{dt}{t}
  \end{equation}
  and
  \begin{equation}\label{Hardy1d1}
    \int_0^{+\infty} \left(t^{-\lambda}\int_0^t
      \psi(s)ds\right)^\gamma \frac{dt}{t}
    \le \lambda^{-\gamma}\int_0^{+\infty}(t^{1-\lambda}\psi
    (t))^\gamma \frac{dt}{t}.
  \end{equation}
\end{prop}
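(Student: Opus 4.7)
The plan is to prove both inequalities by a standard integration-by-parts + Hölder argument, treating the borderline case $\gamma=1$ separately by Fubini. By a routine monotone-convergence / truncation argument one may assume $\psi$ is bounded, supported in a compact subinterval of $(0,+\infty)$, so that all boundary terms below are finite and all integrals are legitimate; the general case follows by passing to the limit.

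For \eqref{Hardy1d2}, introduce the tail $F(t)=\int_t^{+\infty}\psi(s)\,ds$, which is nonincreasing with $F'(t)=-\psi(t)$ a.e. Rewriting the left-hand side as $\int_0^\infty t^{\lambda\gamma-1}F(t)^\gamma\,dt$ and integrating by parts yields
\[
\int_0^{+\infty} t^{\lambda\gamma-1}F(t)^\gamma\,dt=\frac{1}{\lambda}\int_0^{+\infty} t^{\lambda\gamma}F(t)^{\gamma-1}\psi(t)\,dt,
\]
since the boundary term $[t^{\lambda\gamma}F(t)^\gamma/(\lambda\gamma)]_0^\infty$ vanishes under the reduction above. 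Then I would apply Hölder with conjugate exponents $\gamma/(\gamma-1)$ and $\gamma$, splitting the integrand as
\[
t^{\lambda\gamma}F(t)^{\gamma-1}\psi(t)=\bigl(t^{(\lambda\gamma-1)(\gamma-1)/\gamma}F(t)^{\gamma-1}\bigr)\bigl(t^{\lambda+1-1/\gamma}\psi(t)\bigr).
\]
The first factor, raised to $\gamma/(\gamma-1)$, returns $t^{\lambda\gamma-1}F(t)^\gamma$, and the second, raised to $\gamma$, returns $(t^{1+\lambda}\psi(t))^\gamma/t$. Dividing through by the (finite, positive) quantity $\bigl(\int t^{\lambda\gamma-1}F(t)^\gamma\,dt\bigr)^{(\gamma-1)/\gamma}$ and raising to the $\gamma$-th power gives exactly \eqref{Hardy1d2}.

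For \eqref{Hardy1d1}, the argument is entirely symmetric with $G(t)=\int_0^t\psi(s)\,ds$ in place of $F$: one integrates $\int_0^{+\infty}t^{-\lambda\gamma-1}G(t)^\gamma\,dt$ by parts (using $G'=\psi$ and $-\lambda\gamma<0$ so that the boundary term at $0$ and at $+\infty$ vanishes under the same reduction), obtaining
\[
\int_0^{+\infty} t^{-\lambda\gamma-1}G(t)^\gamma\,dt=\frac{1}{\lambda}\int_0^{+\infty} t^{-\lambda\gamma}G(t)^{\gamma-1}\psi(t)\,dt,
\]
and then applies Hölder in the same way, choosing exponents so that the second factor reconstructs $(t^{1-\lambda}\psi(t))^\gamma/t$. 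The borderline case $\gamma=1$ is even easier: switching the order of integration by Fubini gives both inequalities with equality,
\[
\int_0^{+\infty}\!\!\int_t^{+\infty}\!\!\psi(s)\,ds\,t^{\lambda-1}\,dt=\frac{1}{\lambda}\int_0^{+\infty}s^{\lambda}\psi(s)\,ds,
\]
and analogously for \eqref{Hardy1d1}.

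The only delicate point I expect is the justification of the integration by parts and the Hölder step when the right-hand sides are infinite or when $\psi$ does not decay fast enough at $0$ and $\infty$; this is why I would first prove the inequalities for $\psi\in C_c^\infty((0,+\infty))$ and then extend by monotone convergence to all nonnegative measurable $\psi$, so that the inequalities hold automatically when the RHS is $+\infty$ and are obtained in the limit otherwise.
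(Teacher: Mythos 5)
The paper states this proposition without proof---it is the classical one-dimensional weighted Hardy inequality---so there is no internal proof to compare against. Your argument is a correct and standard one. The reduction to bounded, compactly supported $\psi$ guarantees that $F(t)=\int_t^{+\infty}\psi$ (resp.\ $G(t)=\int_0^t\psi$) is Lipschitz and vanishes identically near $+\infty$ (resp.\ near $0$), so both boundary terms in the integration by parts vanish since $\lambda\gamma>0$; the resulting identity
\[
\int_0^{+\infty} t^{\lambda\gamma-1}F(t)^\gamma\,dt=\frac{1}{\lambda}\int_0^{+\infty} t^{\lambda\gamma}F(t)^{\gamma-1}\psi(t)\,dt
\]
is exact, and your exponent bookkeeping in the H\"older split checks out: with $A=t^{(\lambda\gamma-1)(\gamma-1)/\gamma}F(t)^{\gamma-1}$ and $B=t^{\lambda+1-1/\gamma}\psi(t)$ one has $A^{\gamma/(\gamma-1)}=t^{\lambda\gamma-1}F(t)^\gamma$ and $B^\gamma=(t^{1+\lambda}\psi(t))^\gamma/t$, and dividing through (legitimate since the truncated integral is finite, and trivial if it is zero) gives \eqref{Hardy1d2}; the mirror computation with $G$ gives \eqref{Hardy1d1}. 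For $\gamma=1$ your Fubini identity is in fact an equality, which is consistent with, and slightly sharper than, the stated inequality. The final passage to general nonnegative measurable $\psi$ by monotone truncation $\psi_n=\min(\psi,n)\chi_{[1/n,n]}$ is the right move, since then $F_n\uparrow F$, $G_n\uparrow G$ and both sides converge by monotone convergence; I would phrase the approximation this way rather than via $C_c^\infty$ test functions, since the statement is about nonnegative measurable $\psi$ and smoothness is not needed anywhere. An equally common alternative, worth knowing, is to prove these estimates by Minkowski's integral inequality after the substitution $s=tu$, which avoids integration by parts altogether; but your route is fine and complete.
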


We recall that a measurable function $u:\Omega\rightarrow \R$ belongs
to the Lorentz space $L(p,q)$, $1 < p< +\infty$, if the quantity
\begin{equation*}
  \|u\|_{p,q}=\left\{
    \begin{array}{ll}
      \displaystyle\left\{\int_0^{+\infty} \left[t^{1/p} u^{*}(t)\right]^q
        \frac{dt}{t}\right\}^{1/q},&1\le q<+\infty,\\
      \displaystyle\sup_{0<t<+\infty} t^{1/p} u^{*}(t),&q=+\infty,
    \end{array}
  \right.
\end{equation*}
is finite.

In general $\|u\|_{{p,q}}$ is not a norm. As matter of fact, it is
possible to introduce a metric in $L(p,q)$ in the following
way. Let us define
\[
\|u\|_{(p,q)}=\|u^{**}\|_{p,q},
\]
with $u^{**}(t)=t^{-1}\int_0^t u^*(\sigma)\,d\sigma$.
We observe that also $u^{**}$ is a decreasing function, hence
$(u^{**})^*=u^{**}$. By means of the inequality
\eqref{Hardy1d1} and the properties of rearrangements, we
have that for $1<p< +\infty$ and $1\le q \le +\infty$,
\[
\|u\|_{p,q}\le\|u\|_{(p,q)}\le \frac{p}{p-1}\|u\|_{p,q}.
\]
Hence, the topology induced by $\|\cdot\|_{(p,q)}$ and
$\|\cdot\|_{p,q}$ is the same, that is
\begin{equation*}
u_{n}\rightarrow u \text { in }L(p,q) \iff \lim_n
\|u_n-u\|_{{p,q}}=0.
\end{equation*}

We stress that, for any fixed $p$, the Lorentz spaces $L(p,q)$
increase as the secondary exponent $q$ increases. Indeed, if
$1\le q\le r\le +\infty$, there exists a constant $C>0$ depending only
on $p,q$ and $r$ such that
\begin{equation*}
\|u\|_{p,r} \le C \|u\|_{p,q}.
\end{equation*}

More generally, the $L(p,q)$ spaces are related in the
following way:
\[
    L^r\subset L(p,1)\subset L(p,q)\subset L(p,p)=L^p\subset
    L(p,r)\subset L(p,\infty)\subset L^q,
\]
for $1 < q < p < r < +\infty$.

More details on Lorentz spaces can be found, for example, in
\cite{bs}. 

In the next sections, a basic tool will be the Hardy inequality,
stated below.
\begin{prop} For any $u\in W^{1,p}(\R^N)$,
\begin{equation}\label{Hardy}
\int_{\R^N} H(Du)^p dx \ge \Lambda_N \int_{\R^N}
\frac{|u|^p}{H^o(x)^p} dx,
\end{equation}
and the constant $\Lambda_N = \left(\frac{N-p}{p}\right)^p$ is
optimal, and not achieved.
\end{prop}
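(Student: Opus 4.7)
The plan is to reduce \eqref{Hardy} to the classical one-dimensional weighted Hardy inequality along the ``$H^o$-radial'' direction, using the convex symmetrization introduced in Section~2. By density of $C_c^\infty(\R^N)$ in $W^{1,p}(\R^N)$ together with Fatou's lemma on the right-hand side, it suffices to treat $u \in C_c^\infty(\R^N)$, for which the singular integral is automatically finite since $p<N$. Let $u^\star$ denote its convex symmetrization. I then chain together two rearrangement inequalities. First, the anisotropic Pólya--Szegő inequality
\[
\int_{\R^N} H(Du^\star)^p\,dx \le \int_{\R^N} H(Du)^p\,dx,
\]
which is a standard consequence of the co-area formula \eqref{fr} combined with the isoperimetric inequality \eqref{isop} in the spirit of Fleming--Rishel. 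Second, since $x \mapsto 1/H^o(x)^p$ is strictly decreasing in $H^o(x)$ with level sets being homothetic Wulff shapes, it coincides with its own convex symmetrization, and the Hardy--Littlewood inequality yields
\[
\int_{\R^N} \frac{|u|^p}{H^o(x)^p}\,dx \le \int_{\R^N}\frac{|u^\star|^p}{H^o(x)^p}\,dx.
\]
Consequently, it is enough to establish \eqref{Hardy} for $u^\star$ in place of $u$.

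Writing $u^\star(x)=g(H^o(x))$ with $g(r)=u^{*}(\kappa_N r^N)$, the identity $H(DH^o(x))=1$ recorded in Section~2 gives $H(Du^\star(x))=|g'(H^o(x))|$. Slicing by the Wulff shapes $\mathcal W_r$, whose Lebesgue measure is $\kappa_N r^N$, both sides take a one-dimensional form:
\[
\int_{\R^N} H(Du^\star)^p\,dx = N\kappa_N \int_0^{\infty} |g'(r)|^p\, r^{N-1}\,dr,\qquad \int_{\R^N}\frac{|u^\star|^p}{H^o(x)^p}\,dx = N\kappa_N \int_0^{\infty} |g(r)|^p\, r^{N-1-p}\,dr.
\]
The claim then reduces to the one-dimensional weighted Hardy inequality with sharp constant $((N-p)/p)^p$, which follows by integrating $|g|^p$ against the derivative of $r^{N-p}$ and applying Hölder's inequality.

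For sharpness, I test with the family $u_\eps(x)=\eta_\eps(H^o(x))\,H^o(x)^{-(N-p)/p}$, where $\eta_\eps$ is a smooth cutoff supported in $(\eps,1/\eps)$: a direct computation gives a ratio between the two sides of \eqref{Hardy} tending to $\Lambda_N$ as $\eps \to 0$. Non-attainment is immediate from the chain of reductions, since equality would force $u$ to be proportional to $H^o(x)^{-(N-p)/p}$, a function for which both integrals in \eqref{Hardy} diverge logarithmically at the origin and at infinity, and hence is not in $W^{1,p}(\R^N)$. The main technical point of the plan is the Pólya--Szegő step for the convex symmetrization; this rests on Wulff shapes being the minimizers in the anisotropic isoperimetric inequality \eqref{isop}, after which the remaining reductions are routine.
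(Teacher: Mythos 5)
The paper does not give its own proof of this proposition: it simply refers the reader to Van Schaftingen's paper \cite{vs} on anisotropic symmetrization. Your argument is correct and is essentially the expected reconstruction of that symmetrization proof, using the anisotropic P\'olya--Szeg\"o inequality from \cite{aflt} on the left-hand side and Hardy--Littlewood on the right to reduce to the convexly symmetrized function, then exploiting $H(DH^o)=1$ and $|\mathcal W_r|=\kappa_N r^N$ to land on the sharp one-dimensional weighted Hardy inequality; the sharpness and non-attainment reasoning via truncations of $H^o(x)^{-(N-p)/p}$ is also sound.

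One small point worth tightening: in the non-attainment step, what the chain of inequalities directly forces to be a pure power of $H^o$ is the symmetrized function $u^\star$, not $u$ itself (equality in P\'olya--Szeg\"o does not by itself identify $u$ with $u^\star$). This is harmless here, since it already suffices to know that $u^\star$ would have to be proportional to $H^o(x)^{-(N-p)/p}$, which is not in $W^{1,p}(\R^N)$, yielding the contradiction. Phrasing the conclusion in terms of $u^\star$ makes the argument airtight.
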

If $H(\xi)=|\xi|$, \eqref{Hardy} is the classical Hardy
inequality. For a general $H$, \eqref{Hardy} is proved in \cite{vs}.
\begin{rem}\label{remlor}
  The inequality \eqref{Hardy}, using the
  P\'olya Szeg\"o  inequality in the anisotropic case (see
  \cite{aflt}), can be rewritten as
\begin{equation*}
  \|u\|_{{p^*,p}} \le \frac{p}{(N-p)\kappa_N^{1/N}}
  \left(\int_{\R^N}H(Du)^p dx\right)^{1/p},
\end{equation*}
recovering the well-known result $W^{1,p}_0(\Omega) \subset
L(p^*,p)$ (see also \cite{asob}).
\end{rem}
\section{Statement of the problem and main results}
In this section we state the problem and the main results of the
paper. The proofs of the theorems are contained in sections 4 and 5.

Our aim is to prove a priori estimates and existence
results for problems of the type
\begin{equation}\label{eq:gen}
  \left\{
    \begin{array}{ll}
      -\divergenza{\left(a(x,u,Du)\right)}=b(x,u,Du)+g(x,u)+ f(x) &
      \text{in }\Omega, \\
      u = 0 & \text{on }\de\Omega,
    \end{array}
  \right.
\end{equation}
where $a\colon \Omega\times\R\times \mathbb R^N\rightarrow \mathbb
R^N$ is a Carath\'eodory function verifying
\begin{equation}\label{ellipt}
a(x,s,\xi) \cdot \xi \ge H(\xi)^p,
\end{equation}
with $1<p<N$, and
\begin{equation} \label{growth}
|a(x,s,\xi)|\le \alpha (|\xi|^{p-1}+|s|^{p-1}+k(x)), 
  \end{equation}
for a.e. $x\in\Omega$, for any  $(s,\xi)\in\R\times\R^N$, where
$\alpha>0$ and $k\in L_+^{p'}(\Omega)$. Moreover,
\begin{equation}\label{ip-mon}
(a(x,s,\xi) - a(x,s,\xi')) \cdot (\xi-\xi')>0,
\end{equation}
for a.e. $x\in\Omega$, for all $s\in \R,\xi\neq\xi'\in \R^N$.
As regards the lower order terms, we suppose that $b\colon
\Omega\times\R\times \mathbb R^N\rightarrow \mathbb R$ and $g\colon
\Omega\times\R\rightarrow \mathbb R$ are Carath\'eodory functions such
that
\begin{equation}\label{ipb}
  |b(x,s,\xi)|\le \beta(|s|) H(\xi)^q
\end{equation}
for a.e. $x\in\Omega$, for any  $(s,\xi)\in\R\times\R^N$, with
$p-1<q\le p$, and $\beta\colon[0,+\infty[\rightarrow
[0,+\infty[$ is continuous. Moreover,
\begin{equation}\label{ipg}
  \begin{array}{l}
    g(x,s)s \le c(x)|s|^p,\\
    |g(x,s)|\le d(x)|s|^{p-1},
  \end{array}
\end{equation}
for a.e. $x\in\Omega$ and for all $s\in \R$, where $c(x)$ and $d(x)$
are measurable functions in $\Omega$ such that
\begin{equation}\label{ip-c}
(c^+)^\star(x) \le \frac{\lambda}{H^o(x)^p},\quad\forall x\in
\Omega^\star,
\end{equation}
with $\lambda\ge 0$, and $d(x)\in L\left(\frac N p,\infty\right)$.

Finally, we take $f$ is in some suitable Lebesgue or Lorentz spaces
which will be specified in the following.

 If $f\in L((p^*)',p')$, we say that $u\in W^{1,p}_0(\Omega)$ is a
 weak solution of \eqref{eq:gen} if
\begin{equation*}
  \int_\Omega a(x,u,Du)\cdot D\varphi\, dx =
  \int_\Omega [ b(x,u,Du) + g(x,u) + f ]\varphi\, dx,
\end{equation*}
for any $\varphi\in W^{1,p}_0(\Omega)\cap L^{\infty}(\Omega)$.

The summability condition on $f$ given above is the
one which yields solutions in the energy space
$W_0^{1,p}(\Omega)$.

In order to state the main results, we need further assumptions on
$\beta$ and $\lambda$. First of all, let
  \begin{equation}
    \label{Binf}
    B(\infty)= \left(\frac{|\Omega|}{\kappa_N}\right)^{\frac{p-q}{N}}
    \left(\int_0^{\infty}
      \beta(t)^{\frac{1}{q-(p-1)}}\,dt\right)^{q-(p-1)} <\infty,
  \end{equation}
  and, for $1<m<\frac N p$, define the value $\lambda(m)$ as
      \begin{equation}
      \label{def-lambda}
      \lambda(m)=e^{-B(\infty)}\frac{N(m-1)(N-mp)^{p-1}}{m^p(p-1)^{p-1}}.
      \end{equation}
The first result we get is the following.
\begin{theo}
  \label{theo:deb}
  Suppose that \eqref{ellipt} $\div$ \eqref{ip-c}, \eqref{Binf}
  hold. Moreover, suppose that $0\le\lambda<\Lambda_Ne^{-B(\infty)}$. The
  following results hold:
  \begin{itemize}
  \item[(i)] if $f\in L((p^*)',p')$, problem \eqref{eq:gen} admits a
    weak solution $u\in W^{1,p}_0(\Omega)$;
  \item[(ii)]
    if $f\in L(m,\sigma)$, with  $(p^*)'< m < \frac N p$,
    $\max\left\{\frac{1}{p-1},1\right\} \le \sigma
    \le +\infty $, and $0\le \lambda<\lambda(m)$,
    then there exists a weak solution $u$ to \eqref{eq:gen} such that
    \[
    \|u\|_{\frac{Nm (p-1)}{N-mp},\,\sigma(p-1)}\le C
    \|f\|_{m,\sigma}^{\frac {1} {p-1}}.
    \]
  \end{itemize}
\end{theo}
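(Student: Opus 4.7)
My approach is standard in broad outline but technically delicate in its execution: build approximating problems with bounded nonlinearities, derive uniform a priori bounds by an anisotropic Talenti-type rearrangement argument, and pass to the limit by an almost-everywhere convergence argument for the gradients.

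\textbf{Approximation.} For $n\in\mathbb N$, I would replace $b(x,s,\xi)$ by a truncation $b_n(x,s,\xi):= T_n\bigl(b(x,s,\xi)\bigr)$, where $T_n$ is the usual truncation at level $n$, and $f$ by $f_n:=T_n(f)\in L^\infty(\Omega)$. The approximating problem
\begin{equation*}
\begin{cases}
-\divergenza\bigl(a(x,u_n,Du_n)\bigr)=b_n(x,u_n,Du_n)+g(x,u_n)+f_n(x) & \text{in }\Omega,\\
u_n=0 & \text{on }\de\Omega,
\end{cases}
\end{equation*}
has a solution $u_n\in W^{1,p}_0(\Omega)\cap L^\infty(\Omega)$ by the classical Leray-Lions theory, since $|b_n|\le n$ is bounded and the pseudomonotonicity of $-\divergenza(a(x,u,Du))$ is preserved when $g$ and $d\in L(N/p,\infty)$ are treated perturbatively.

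\textbf{A priori estimates via convex symmetrization.} The core of the argument is a pointwise estimate for the decreasing rearrangement $u_n^*$ obtained as in \cite{aflt} and \cite{dpg3}. Testing the approximating equation with $\sign(u_n)\,\chi_{\{|u_n|>t\}}$ (after regularization) and using \eqref{ellipt}, the coarea formula \eqref{fr}, the anisotropic isoperimetric inequality \eqref{isop}, and standard properties of the distribution function, one obtains
\begin{equation*}
1\le \frac{1}{\bigl(N\kappa_N^{1/N}\bigr)^p\,s^{p-1-p/N}}\Bigl(-\frac{du_n^*}{ds}\Bigr)^{1-p}\!\!\int_0^s\!\!\Bigl[\beta(u_n^*)H(Du_n)^{*\,q}+c^+(x)^{\star}(u_n^*)^{p-1}+|f_n|^*\Bigr]\!ds',
\end{equation*}
where the Hardy potential enters through the symmetrization hypothesis \eqref{ip-c}. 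Applying Young's inequality (with exponents $p/q$ and $p/(p-q)$) to absorb the term involving $H(Du_n)^q$ into the principal term $H(Du_n)^p$ produces an exponential factor whose logarithm is controlled precisely by $B(\infty)$, cf.~\eqref{Binf}; the Hardy-type contribution contributes a coefficient proportional to $\lambda$. The resulting integro-differential inequality for $u_n^*$ becomes, after multiplying by $e^{B(\infty)}$, an inequality of the form
\begin{equation*}
\Bigl(N\kappa_N^{1/N}s^{1-1/N}\Bigr)^p\Bigl(-\frac{du_n^*}{ds}\Bigr)^{p-1}\le e^{B(\infty)}\!\int_0^s\!\Bigl[\tfrac{\lambda}{(\kappa_N^{-1}s)^{p/N}}\bigl(u_n^*\bigr)^{p-1}+|f_n|^*\Bigr]\!ds'.
\end{equation*}
The hypothesis $\lambda<\Lambda_N e^{-B(\infty)}$ (respectively $\lambda<\lambda(m)$ in part (ii)) is precisely what is needed so that, applying the Hardy-type inequalities \eqref{Hardy1d2}-\eqref{Hardy1d1} to handle the self-referential term on the right-hand side, one can absorb the $u_n^*$ contribution into the left and obtain the desired uniform control on $u_n^*$.

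\textbf{Passage to the limit.} From the estimate on $u_n^*$ one deduces that $\{u_n\}$ is bounded in $W^{1,p}_0(\Omega)$ in case (i), and in the stronger Lorentz-Sobolev class giving the bound of case (ii). Up to a subsequence, $u_n\rightharpoonup u$ weakly in $W^{1,p}_0(\Omega)$ and a.e. The main obstacle is the passage to the limit in the gradient-dependent nonlinearity $b_n(x,u_n,Du_n)$: weak convergence alone does not suffice because the growth in $\xi$ is of order $q>p-1$. I would overcome this by the Boccardo-Murat type argument for almost-everywhere convergence of the gradients, testing with $T_k(u_n-u)\exp(\Gamma(u_n))$ for a suitably chosen $\Gamma$, which together with the monotonicity \eqref{ip-mon} yields $Du_n\to Du$ a.e.; the Vitali and dominated convergence theorems then complete the limit in every term.

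\textbf{Part (ii).} Once the uniform rearrangement estimate from the symmetrization step is established in the sharper form appropriate to $f\in L(m,\sigma)$, the bound $\|u\|_{Nm(p-1)/(N-mp),\,\sigma(p-1)}\le C\|f\|_{m,\sigma}^{1/(p-1)}$ follows from the definition of the Lorentz norms together with \eqref{Hardy1d1} applied to the rearrangement identity for $u_n^*$; the threshold $\lambda(m)$ in \eqref{def-lambda} is exactly the constant that makes the Hardy-type inequality strict on both sides.
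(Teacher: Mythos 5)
Your high-level architecture (truncated approximating problems, uniform a priori bounds by anisotropic rearrangement, almost-everywhere gradient convergence to pass to the limit) is exactly the route the paper follows; the passage to the limit via truncation plus a Boccardo--Murat-type a.e.\ convergence argument is also the paper's strategy. However, the central step---the derivation of the a priori estimate via convex symmetrization---is described in a way that does not work, and this is where the real content of the proof lies.

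First, the displayed pointwise inequality contains the quantity $\beta(u_n^*)\,H(Du_n)^{*\,q}$ under the integral. The decreasing rearrangement of the product $\beta(|u_n|)H(Du_n)^q$ is not the product of the rearrangements, and $\beta(u_n^*)$ and $H(Du_n)^{*}$ live on different ``variables''; this expression is not meaningful and cannot be obtained from the level-set identity. The paper avoids this by keeping the gradient term as $\int_{\{|u_n|>t\}}\beta(|u_n|)H(Du_n)^q\,dx$, rewriting it as a single integral over levels $s>t$ via $\int_t^{+\infty}\beta(s)\bigl(-\tfrac{d}{ds}\int_{\{|u_n|>s\}}H(Du_n)^q\bigr)ds$, and relating that to $-\tfrac{d}{ds}\int_{\{|u_n|>s\}}H(Du_n)^p$ by H\"older, coarea, and the isoperimetric inequality.

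Second, and more seriously, you claim that Young's inequality with exponents $p/q$ and $p/(p-q)$ absorbs $\beta H(Du)^q$ into $H(Du)^p$ and ``produces an exponential factor whose logarithm is controlled precisely by $B(\infty)$.'' Young's inequality would instead leave an additive term proportional to $\beta(|u_n|)^{p/(p-q)}$; it never produces an exponential. Moreover, the exponent is wrong: $B(\infty)$ involves $\int_0^\infty\beta(t)^{1/(q-(p-1))}\,dt$, and $\tfrac{1}{q-p+1}\ne\tfrac{p}{p-q}$ except for one specific $q\in(p-1,p)$. The correct mechanism is a Gronwall (integrating-factor) argument: one first arrives at an inequality of the form
\[
-\frac{d}{dt}\int_{\{|u_n|>t\}}H(Du_n)^p\,dx\ \le\ C\int_t^{+\infty}\beta(s)\,K(s)\Bigl(-\frac{d}{ds}\int_{\{|u_n|>s\}}H(Du_n)^p\,dx\Bigr)ds+\int_0^{\mu_{u_n}(t)}z(s)\,ds,
\]
with $K(s)=\bigl(-\mu_{u_n}'(s)/\mu_{u_n}(s)^{1-1/N}\bigr)^{p-q}$ and $z=(c^+)^*(u_n^*)^{p-1}+f_n^*$, and then Gronwall yields the exponential weight. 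The constant $B(\infty)$ then emerges from a H\"older inequality \emph{inside the exponent} with conjugate exponents $\tfrac{1}{q-(p-1)}$ and $\tfrac{1}{p-q}$ together with the bound $\int_0^\infty\frac{-\mu'_{u_n}}{\mu_{u_n}^{1-1/N}}\le N|\Omega|^{1/N}$. Without the Gronwall step the lower-order gradient term cannot be controlled, and your proposed Young absorption is inconsistent with the need to differentiate in $t$ (one cannot ``move $\eps H(Du_n)^p$ to the left'' of a $t$-derivative of a level-set integral).

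Once the corrected integro-differential inequality (the paper's \eqref{equ}) is in hand, the one-dimensional Hardy inequalities \eqref{Hardy1d2}--\eqref{Hardy1d1} do indeed yield the Lorentz estimate of part (ii) and identify $\lambda(m)$ exactly as you indicate, and for part (i) the energy bound follows via the anisotropic Hardy inequality \eqref{Hardy} and P\'olya--Szeg\H o. A final small remark: the paper truncates $g$ as well ($g_n=T_n(g)$) so that each approximating right-hand side is bounded, which is the clean way to invoke Leray--Lions; leaving $g$ untruncated as you propose is not obviously justified, since $c(x)$ is only assumed to satisfy the pointwise Hardy bound \eqref{ip-c} after symmetrization.
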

From the embedding of Lorentz spaces, the above theorem gives
immediately the following result.
\begin{cor}
  Suppose that the hypotheses \eqref{ellipt} --
  \eqref{ip-c},\eqref{Binf} hold. If $f\in L^m(\Omega)$, with
  $(p^*)'< m < \frac N p$, and $0\le \lambda<\lambda(m)$, with
  $\lambda(m)$ as in \eqref{def-lambda}, then there exists a weak
  solution $u$ to \eqref{eq:gen} such that 
  \[
  \|u\|_{\frac{Nm(p-1)}{N-mp}}\le C,
  \]
  for some constant $C$ depending on the norm $\|f\|_m$.
\end{cor}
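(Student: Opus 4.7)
The plan is to deduce this corollary directly from part (ii) of Theorem \ref{theo:deb} by identifying $L^m(\Omega)$ with the Lorentz space $L(m,m)$ and then using the embedding chain for Lorentz spaces recalled in Section 2, namely $L(p,q)\subset L(p,r)$ whenever $q\le r$ (with $L(p,p)=L^p$).

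First I would observe that, since $f\in L^m(\Omega)=L(m,m)$, the natural choice in Theorem \ref{theo:deb}(ii) is $\sigma=m$. To apply that theorem, I must verify the side condition $\max\{1/(p-1),1\}\le m\le +\infty$. Since $m>(p^*)'>1$, the bound $m\ge 1$ is immediate; the bound $m\ge 1/(p-1)$ (only relevant when $p<2$) follows from $m>(p^*)'=\frac{Np}{Np-N+p}\ge \frac{1}{p-1}$, which is equivalent to $(N-p)(p-1)\le N(p-1)$, true for $p<N$. With this choice of $\sigma$, Theorem \ref{theo:deb}(ii) yields a weak solution $u$ of \eqref{eq:gen} satisfying
\[
\|u\|_{\frac{Nm(p-1)}{N-mp},\,m(p-1)}\le C\,\|f\|_{m,m}^{\frac{1}{p-1}}=C\,\|f\|_m^{\frac{1}{p-1}}.
\]

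Second, I would use the monotonicity of Lorentz spaces in the secondary exponent to pass from the mixed-norm estimate above to the pure Lebesgue estimate. Setting $r=\frac{Nm(p-1)}{N-mp}$, I need $m(p-1)\le r$; this inequality is equivalent to $N-mp\le N$, i.e.\ $mp\ge 0$, which is trivially true. Hence $L(r,m(p-1))\hookrightarrow L(r,r)=L^r$, giving
\[
\|u\|_{r}\le C'\|u\|_{r,\,m(p-1)}\le C''\|f\|_m^{\frac{1}{p-1}},
\]
which is exactly the asserted bound with a constant depending only on $\|f\|_m$ (and on the structural data).

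There is no real obstacle here: the statement is a straightforward corollary, and the only minor check is that the secondary-exponent condition from Theorem \ref{theo:deb}(ii) is automatic under the hypothesis $m>(p^*)'$, together with the elementary inequality $m(p-1)\le \frac{Nm(p-1)}{N-mp}$ needed to invoke the Lorentz embedding.
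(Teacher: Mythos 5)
Your overall strategy (apply Theorem~\ref{theo:deb}(ii) for a suitable $\sigma$ and then use the monotonicity of Lorentz spaces in the secondary index) is the same one the paper uses, but your particular choice $\sigma=m$ does not work in general, and the verification you give contains an algebra error.

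Concretely, you claim that $(p^*)'\ge\frac{1}{p-1}$ is equivalent to $(N-p)(p-1)\le N(p-1)$, which is trivially true. Carrying out the cross-multiplication correctly, $(p^*)'=\frac{Np}{Np-N+p}\ge\frac{1}{p-1}$ is equivalent to $Np(p-1)\ge Np-N+p$, i.e.\ to $N(p-1)^2\ge p$, and this fails for a nonempty range of parameters (for instance $N=3$, $p=1.6$ gives $N(p-1)^2=1.08<1.6=p$; then $(p^*)'\approx 1.41<\frac{1}{p-1}\approx 1.67<\frac{N}{p}=1.875$, so any $m\in(1.41,1.67)$ satisfies the hypotheses of the corollary while $\sigma=m<\frac{1}{p-1}$ violates the side condition of Theorem~\ref{theo:deb}(ii)). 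Hence there are admissible $m$ for which the theorem cannot be invoked with $\sigma=m$.

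The fix, which is what the paper does implicitly (and explicitly in Remark~\ref{remstime}), is to take instead
\[
\sigma=\frac{Nm}{N-mp}.
\]
This choice has three virtues. First, $\sigma(p-1)=\frac{Nm(p-1)}{N-mp}$ equals the target Lebesgue exponent, so the bound from Theorem~\ref{theo:deb}(ii) is directly $\|u\|_{r,r}=\|u\|_{L^r}$ with $r=\frac{Nm(p-1)}{N-mp}$, no second embedding needed. Second, since $N-mp<N$ we have $\sigma>m$, so $f\in L^m=L(m,m)\subset L(m,\sigma)$ by the monotonicity you quoted. Third, the side condition $\sigma\ge\frac{1}{p-1}$ amounts to $m\ge\frac{N}{Np-N+p}=\frac{(p^*)'}{p}$, which holds automatically because $m>(p^*)'>\frac{(p^*)'}{p}$ for $p>1$; the condition $\sigma\ge1$ reduces to $m\ge\frac{N}{N+p}$, also automatic since $m>1$. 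More generally, any $\sigma\in\left[\max\{m,\frac{1}{p-1}\},\,\frac{Nm}{N-mp}\right]$ would do, but $\sigma=m$ lies outside this interval in the problematic regime.
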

\begin{rem}
  \label{remopt}
  At least in the case $\beta=0$, the value $\lambda=\lambda(m)$, with
  $\lambda(m)$ as in \eqref{def-lambda}, is optimal in order to obtain
  the estimates in \eqref{stime1}. Let
  $(p^*)'=\frac{Np}{Np-N+p}<m<\frac{N}{p}$, and $0<\lambda <
  \Lambda_N=\left(\frac{N-p}{p}\right)^p$. For sake of
  simplicity, we prove the optimality of $\lambda(m)$ in the case of
  estimates in Lebesgue spaces, that is when $\sigma=\frac{Nm}{N-mp}$.

  We first consider radial solutions $v(x)=v(r)$, $r=H^o(x)$, $x\in
  \mathcal W$, of the equation
 \begin{equation*}
   -\mathcal{Q} v = \frac{\lambda}{H^o(x)^p} |v|^{p-2}v,\quad
   x\in \W,
 \end{equation*}
 where $\W$ is the Wulff shape. We also suppose that $H\in
 C^2(\R^N\setminus\{0\})$. In particular, we look for solutions 
 $v=v_\alpha=r^{-\alpha}$, with $\alpha>0$, which satisfy the ODE
  \begin{equation}\label{eq:radomo}
    \begin{array}{ll}
      -|v'|^{p-2}\left( (p-1)v''+\dfrac{N-1}{r} v' \right)=
      \dfrac{\lambda}{r^p}|v|^{p-2}v & \text{in }]0,1[.
    \end{array}
\end{equation}
Then, $v_\alpha$ solves \eqref{eq:radomo} if $\alpha$ satisfies the
equation
\begin{equation*}
F(\alpha)=\lambda, \quad\text{where }F(\alpha):=-(p-1)\alpha^p +
(N-p) \alpha^{p-1}.
\end{equation*}

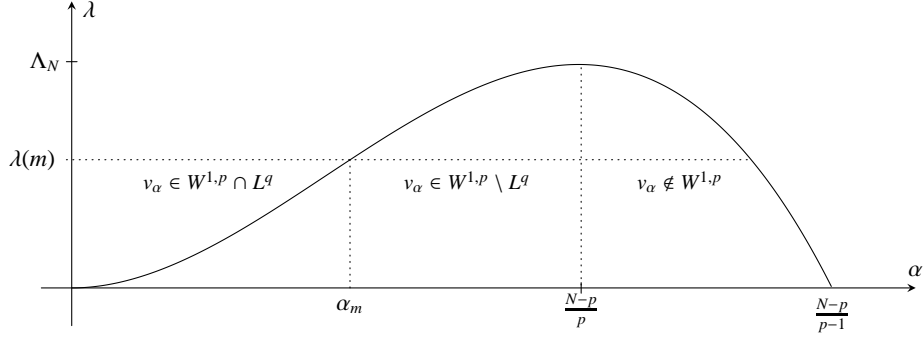
\begin{figure}
\definecolor{ttqqtt}{rgb}{0.2,0,0.2}
\begin{tikzpicture}[line cap=round,line join=round,>=triangle
  45,x=1.0cm,y=1.0cm,scale=10,,>=stealth]
\draw[->,color=black] (-0.04,0) -- (1.11,0);
\draw[->,color=black] (0,-0.05) -- (0,0.38);
\foreach \y in {0.3}
\draw[shift={(0,\y)},color=black] (.2pt,0pt) -- (-.2pt,0pt);
\draw (0,.3) node [anchor=east]
{\scriptsize$\Lambda_N$};
\clip(-0.1,-0.07) rectangle (1.12,0.381);
\draw[smooth,samples=200,domain=0.0:1.0] plot(\x,{0-2*(\x)^3+2*(\x)^2});
\draw (.67,-.2pt) -- (.67,.2pt) node [anchor=north] {\scriptsize
  $\frac{N-p}{p}$};
\draw[dotted] (.67,.292) -- (.67,0);
\draw (1,0) node [anchor=north] {\scriptsize $\frac{N-p}{p-1}$};
\draw (1.11,0) node [anchor=south] {\scriptsize $\alpha$};
\draw (0,0.37) node [anchor=west] {\scriptsize $\lambda$};
\draw[dotted] (-.2pt,.17) node [anchor=east]{\scriptsize $\lambda(m)$}
-- (.89,.17);
\draw[dotted] (.366,0) node [anchor=north]{\scriptsize $\alpha_m$}
-- (.366,.17);
\draw (.18, .17) node  [anchor=north]{\tiny $v_\alpha\in
  W^{1,p}\cap L^q$};
\draw (.52,.17) node  [anchor=north]{\tiny $v_\alpha\in
  W^{1,p}\setminus L^q$};
\draw (.8,.17) node  [anchor=north]{\tiny $v_\alpha\not\in
  W^{1,p}$};
\end{tikzpicture}
\caption{Graph of the function $F(\alpha)= -(p-1) \alpha^p + (N-p)
  \alpha^{p-1}$ in Remark \ref{remopt}. Here we 
  consider the case $\frac N p> m>(p^*)'$, and $q=\frac{Nm(p-1)}{N-mp}$.}
\end{figure}

We stress that $v_\alpha\in L^{\frac{Nm(p-1)}{N-mp}}(\W)$ if
$\alpha<\alpha_m=\frac{N-mp}{m(p-1)}$. Moreover, $\alpha_m < \frac
{N-p}{p}$, being $m>(p^*)'$. Hence $v\in W^{1,p}(\mathcal W)$.
Furthermore, $F(\alpha_m)=\lambda(m)$. In order to prove the
optimality of $\lambda(m)$, we observe that the positive function
$z_\alpha(x) =z_\alpha(r) = v_\alpha(r)-1$ is such that
\begin{equation*}
  \left\{
    \begin{array}{ll}
      -\mathcal Q z_\alpha = \dfrac{\lambda(m)}{H^o(x)^p}
      z_\alpha^{p-1} + g(H^o(x)) &
      \text{in }\mathcal W, \\[.2cm]
       z_\alpha = 0 & \text{on }\de \mathcal W,
    \end{array}
  \right.
\end{equation*}
with
\[
g(r)=\lambda(m) \dfrac{ (z_\alpha+1)^{p-1}-z_\alpha^{p-1} }{r^p} =
\lambda(m)
\frac{1-(1-r^{\alpha_m})^{p-1}}{r^{\alpha_m}} \cdot \frac
{1}{r^{p+\alpha_m(p-2)}}.
\]
The condition $m<\frac{N}{p}$ gives that $g \in
L^{m}(\W)$. Nevertheless, for $\alpha\ge \alpha_m$,
$z_\alpha$ does not belong to $L^{\frac{Nm(p-1)}{N-mp}}(\W)$. This
prove the optimality of $\lambda(m)$.
\end{rem}

Next step is to state an existence and regularity result for problems
whose datum $f$ is in $L^m$, $m>1$. To this aim, we deal with
entropy solutions. 

Following \cite{bbggpv}, for a general $f\in
L^1(\Omega)$ we will say that a measurable function $u$ is an entropy
solution of \eqref{eq:gen} if $g(x,u),b(x,u,Du) \in L^1(\Omega)$ and, for any
$k>0$, $T_k(u)\in W_0^{1,p}(\Omega)$ and
\begin{equation}\label{defsol-entr}
  \int_\Omega a(x,u,Du)\cdot D T_k(u-\varphi) \, dx \le
  \int_\Omega [ b(x,u,Du) + g(x,u) + f ] T_k ( u - \varphi ) \, dx,
\end{equation}
for all $\varphi\in W_0^{1,p}(\Omega)\cap L^{\infty}(\Omega)$.
When $T_k(u)\in W_0^{1,p}(\Omega)$ for any $k>0$, it is possible to
define the weak gradient of $u$, namely $Du$, as the function such
that $DT_k(u)=(Du)\chi_{\{|u|\le k\}}$, for any $k>0$ (see \cite{bbggpv}).

The following result holds.
\begin{theo}
  \label{theo:entr}
Let us suppose that \eqref{ellipt} -- \eqref{ip-c}, \eqref{Binf}
hold. If $f\in L(m,\sigma)$, $1<m < (p^*)'$, $p' \le \sigma\le
\infty$, and $0\le \lambda<\lambda(m)$, with $\lambda(m)$ as in
\eqref{def-lambda}, then there exists an entropy
solution of \eqref{eq:gen} such that
\begin{equation}
  \label{estg}
  \norm{H(Du)^{p-1}}_{\frac{Nm}{N-m},\sigma} \le C\|f\|_{m,\sigma}.
\end{equation}
\end{theo}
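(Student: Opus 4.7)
The proof follows the now-standard strategy for entropy solutions, combined with the convex symmetrization machinery of Section 2. The additional ingredients needed for the low-summability regime $1<m<(p^*)'$ are (a) an exponential change of unknown that absorbs the gradient term $\beta(|u|)H(Du)^q$ into the principal part, and (b) a sharp ODE comparison on $(0,|\Omega|)$ whose solvability is exactly governed by the threshold $\lambda<\lambda(m)$.

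First I would approximate: set $f_n=T_n(f)\in L^\infty(\Omega)\subset L((p^*)',p')$ and replace $\beta$ by a bounded continuous truncation $\beta_n$. By Theorem \ref{theo:deb}(i) there exist weak solutions $u_n\in W_0^{1,p}(\Omega)$ of the regularized equations, in fact in $L^\infty(\Omega)$ by Stampacchia. The target is to prove uniform-in-$n$ bounds for $T_k(u_n)$ in $W_0^{1,p}(\Omega)$ together with the Lorentz estimate \eqref{estg}; these are enough to extract an a.e.\ limit $u$ and to pass to the entropy formulation \eqref{defsol-entr}.

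Next, I would derive the a priori estimates by adapting the convex symmetrization argument of \cite{aflt,dpg3} (as for Theorem \ref{theo:deb}). The natural test function is $T_k(G_h(\Phi(u_n)))$, where $\Phi$ is a Ferone--Murat type transform whose derivative is exponential in $\int_0^t\beta_n(s)^{1/(q-p+1)}ds$ (and one first invokes Young's inequality when $q<p$). Hypothesis \eqref{Binf} forces $\Phi$ to be bi-Lipschitz, so the gradient term is absorbed into the principal part at the price of a multiplicative factor $e^{B(\infty)}$. Combining Fleming--Rishel, the coarea formula \eqref{fr} and the anisotropic isoperimetric inequality \eqref{isop}, the distribution function of $v_n=\Phi(u_n)$ satisfies a differential inequality which, after the change of variable $s=\kappa_N H^o(x)^N$, reduces to a one-dimensional radial problem on $\Omega^\star$ with Hardy coefficient $\lambda e^{B(\infty)}$ and datum $f^\ast$. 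The condition $\lambda<\lambda(m)$ is exactly what renders the Hardy inequalities \eqref{Hardy1d1}--\eqref{Hardy1d2} applicable to this ODE, producing the uniform estimate
\[
\|H(Du_n)^{p-1}\|_{\frac{Nm}{N-m},\sigma}\le C\|f_n\|_{m,\sigma},
\]
and, analogously, a bound on $u_n^\ast$ in the Lorentz space $L(Nm(p-1)/(N-mp),\sigma(p-1))$.

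Finally, the uniform $L^1$ bound on $H(Du_n)^{p-1}$ gives uniform $W_0^{1,p}(\Omega)$ bounds on $T_k(u_n)$ for every $k$. Extracting a subsequence with $u_n\to u$ a.e., Boccardo--Murat's almost-everywhere convergence of the gradients together with the strict monotonicity \eqref{ip-mon} upgrades to strong convergence $T_k(u_n)\to T_k(u)$ in $W_0^{1,p}(\Omega)$; Vitali's theorem handles the passage to the limit in the lower order terms, the equi-integrability of $\beta_n(|u_n|)H(Du_n)^q$ being guaranteed by $q\le p$ and the Lorentz bound just obtained. Passing to the limit in the inequality produced by testing the approximated equations against $T_k(u_n-\varphi)$ yields \eqref{defsol-entr}, and \eqref{estg} follows by lower semicontinuity of the Lorentz norm. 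I expect the main obstacle to be the simultaneous control of the Hardy potential, whose effective constant must stay strictly below $\Lambda_N e^{-B(\infty)}$ throughout the argument, and the critical-in-energy gradient term when $q=p$: without $u_n\in L^\infty$ uniformly one cannot test with $u_n$ itself, and the sharpness of $\lambda(m)$ exhibited in Remark \ref{remopt} leaves no room for losses in the symmetrization comparison.
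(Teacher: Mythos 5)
Your overall strategy matches the paper's: approximate, derive uniform a priori estimates via convex symmetrization, extract an a.e.\ limit via gradient compactness, and pass to the limit in the entropy formulation. The exponential change of unknown you invoke is the same idea as the Gronwall argument on the level sets that underlies Theorem~\ref{stima0} in the paper, and the use of \eqref{Hardy1d1}--\eqref{Hardy1d2} with the threshold $\lambda<\lambda(m)$ is identical in spirit to Theorems~\ref{stime1} and~\ref{stime2g}. However, one step of your argument is wrong, and it is the one that makes entropy solutions work.

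You assert that the uniform Lorentz bound on $H(Du_n)^{p-1}$ (in particular its $L^1$ bound) already yields uniform bounds for $T_k(u_n)$ in $W_0^{1,p}(\Omega)$. This implication is false in the regime $1<m<(p^*)'$: the Lorentz estimate gives $H(Du_n)$ uniformly bounded in $L^{r}$ with $r=\frac{Nm(p-1)}{N-m}$, and a direct computation shows $r<p$ precisely when $m<(p^*)'$, so no global $L^p$ information about the gradient is available. Restricting to $\{|u_n|<k\}$ does not help by itself, because the Lorentz bound contains no information localized to sublevel sets. The correct argument (and the one in the paper) instead integrates the pointwise comparison inequality \eqref{eq:est-grad} for $-\frac{d}{dt}\int_{\{|u_n|>t\}}H(Du_n)^p\,dx$ over $t\in(0,k)$: the right-hand side of \eqref{eq:est-grad} is uniformly bounded by \eqref{eq:zero}, which gives $\int_\Omega H(DT_k(u_n))^p\,dx\le Ck$. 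So the truncation bound comes from the differential comparison on the level sets, \emph{not} from the Lorentz estimate on the gradient; the two facts are obtained from the symmetrization inequality in different ways and neither implies the other. A secondary, smaller issue: in the entropy framework, where only the truncates $T_k(u_n)$ are controlled in $W_0^{1,p}$, the a.e.\ gradient convergence is obtained from the Boccardo--Gallou\"et--type compactness (\cite{bg92}, see also \cite{dv95}), not from the Boccardo--Murat result \cite{bm92} (the latter requires a uniform $W_0^{1,p}$ bound on $u_n$ itself, which is exactly what you do not have here).
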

\begin{rem}
 We stress that the assumptions of Theorem \ref{theo:entr} do not
 allow to obtain the estimate \eqref{estg} for $\sigma=m$. As matter
 of fact, defined $\bar m= \frac{N}{Np-N+1}$, when $f\in L^m(\Omega)$,
 $\max\{1,\bar m\}\le m<(p^*)'$, it is possible to prove the existence of
 a solution such that  
 \begin{equation*}
 \|H(Du)\|_{\frac{Nm(p-1)}{N-m}} \le C(\|f\|_{m}).
 \end{equation*}
  We refer the reader to remarks \ref{remstime} and \ref{remex}.

 In the case $1<m<\max\{1,\bar m\}$, the solutions we obtain no
 longer belong to a Sobolev space, but Theorem \ref{theo:entr}
 guarantees that there exists a solution $u$ such that, for example,
 \[
  \norm{H(Du)^{p-1}}_{\frac{Nm}{N-m},\infty} \le C\|f\|_{m,\infty}.
 \]
 Actually, $\bar m>1$ only if $p< 2-\frac 1 N$. In this case an
 estimate of the type
 \[
  \|H(Du)^{p-1}\|_{\frac{Nm}{N-m}} \le C(\|f\|_{m})
  \]
  holds for any $1<m\le \bar m$ (see Remark \ref{remstime}).
\end{rem}

\begin{rem}
  We explictly observe that, in general, the above Theorem does not
  hold for $f\in L^1(\Omega)$. For example, it has been proved in
  \cite{bop} that, when $H(\xi)=|\xi|$, $p=2$ and $\beta\equiv 0$, for
  any $\lambda>0$ no a priori estimate holds for problem
  \eqref{eq:gen}. As matter of fact, when $\lambda=0$, if $\beta\in
  L^1(\Omega)$ and $p=q$ it is possible to prove the existence of a
  solution of \eqref{eq:gen} (see for example \cite{seg03},
  \cite{por02}).
\end{rem}

\begin{rem}\label{opth}
  We stress that the bounds \eqref{eq:lin} and \eqref{eq:lin2} on $H$
  and $H^o$, and the conditions
  \eqref{ellipt}, \eqref{ipb} and \eqref{ip-c} give that
  \[
  a(x,s,\xi)\cdot \xi \ge c_1^p |\xi|^p,\quad |b(x,s,\xi)| \le
  c_2^q\,\beta(|s|)|\xi|^q,
  \]
  and
  \[
  (c^+)^\star(x) \le \frac{\lambda c_2^p}{|x|^p}.
  \]
  Hence, under the above growth conditions, the classical Schwarz
  symmetrization tecnique can be applied to problem \eqref{eq:gen}. In
  this way, it is possible to obtain analogous results than
  Theorem \ref{stime1}, and consequently Theorem \ref{theo:deb}, but
  requiring a stronger assumption on the smallness of $\lambda>0$ (see
  also \cite{aflt} and Remark 4.1 in \cite{dpg3}). This justifies the
  use of the more general convex symmetrization.
\end{rem}

\begin{rem}
  Let $H(\xi)=|\xi|$. If $q=p$, the regularity estimates obtained in
  Theorems \ref{stime1} and \ref{stime2g} are slightly more general
  than the analogous one contained in \cite{lmp}. Indeed, in such
  paper the datum $f$ in suitable Lebesgue space is considered, while
  we give optimal regularity results in Lorentz spaces.
\end{rem}

\section{A priori estimates and approximating problems}

The first aim of this section is to prove three
integro-differential inequalities for the rearrangements of solutions of
\eqref{eq:gen}, in the spirit of the comparison results contained, for
instance, in \cite{tal1}, \cite{ta79}, \cite{aftgrad},
\cite{fermes}. To prove such inequalities we need the additional
assumption \eqref{Binf}.

\begin{theo}\label{stima0}
  Suppose that \eqref{ellipt}, \eqref{growth}, \eqref{ipb} $\div$
  \eqref{ip-c} hold, and $f\in L((p^*)',p')$. Moreover, suppose that
  $\beta(s)$ verifies \eqref{Binf}. Then any weak solution $u\in
  W_0^{1,p}(\Omega)$ of problem \eqref{eq:gen} satisfies
\begin{equation}\label{eq:est-grad}
- \frac{d}{dt}\int_{\{|u|>t\}} H(Du)^p dx\le e^{B(\infty)}
\int_0^{\mu_u(t)} [(c^+)^*(s)u^*(s)^{p-1}+ f^*(s)]ds,\quad \text{a.e. }t>0,
\end{equation}
  \begin{equation}\label{equ}
    u^*(s)\le
    e^{\frac{B(\infty)}{p-1}}\left(N\kappa_N^{1/N}\right)^{-p'}
   \int_s^{|\Omega|} t^{-\frac{p'}{N'}}
    \left(\int_0^t [(c^+)^*(r)(u^*(r))^{p-1} +f^*(r)]
      dr\right)^{\frac{1}{p-1}} dt,\quad s\in ]0,|\Omega|].
  \end{equation}
  Moreover, for any $\alpha>\frac{p'}{N'}-1$ we have that
  \begin{multline}
    \label{eq:gradlor}
    [H(Du)^*(s)]^p \le \\ \le e^{\frac{B(\infty)}{p-1}}
    \left(N\kappa_N^{1/N}\right)^{-p'} \left[
      \frac{1}{s^{\alpha+1}}
      \int_0^s t^{\alpha -\frac{p'}{N'}}
      \left(\int_0^t [(c^+)^*(r)(u^*(r))^{p-1}+f^*(r)] dr \right)^{p'} d t
      + \right. \\ \left. +
      \frac 1 s
\int_s^{|\Omega|} t^{-\frac{p'}{N'}}
      \left(\int_0^t [(c^+)^*(r)(u^*(r))^{p-1}+f^*(r)] dr \right)^{p'} d t
    \right],\quad s\in ]0,|\Omega|[.
  \end{multline}
\end{theo}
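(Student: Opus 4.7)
The plan is to follow the classical Talenti/Alvino--Ferone--Trombetti symmetrization scheme, adapted to the anisotropic setting via the isoperimetric inequality \eqref{isop}, and enhanced by a Gronwall-type device to absorb the gradient-dependent lower-order term $\beta(|u|)H(Du)^q$. First I would test the weak formulation of \eqref{eq:gen} with $\varphi_h=\min\{h,(|u|-t)^+\}\sign(u)$ and let $h\to 0^+$; combined with \eqref{ellipt}, \eqref{ipb}--\eqref{ip-c} and the Hardy--Littlewood inequality, this yields the pointwise differential inequality
\[
W(t) := -\frac{d}{dt}\int_{\{|u|>t\}} H(Du)^p\,dx \le \int_{\{|u|>t\}} \beta(|u|) H(Du)^q\,dx + \psi(t),
\]
where $\psi(t)=\int_0^{\mu_u(t)}[(c^+)^*(s)u^*(s)^{p-1}+f^*(s)]\,ds$.

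The decisive step is the treatment of the $\beta$-integral. By the co-area formula and H\"older's inequality on the level set $\{|u|=s\}$ with conjugate exponents $p/q,\ p/(p-q)$,
\[
\int_{\{|u|>t\}}\beta(|u|)H(Du)^q\,dx \le \int_t^{+\infty}\beta(s)\,W(s)^{q/p}\,(-\mu_u'(s))^{(p-q)/p}\,ds.
\]
For $q=p$ this is a linear Volterra inequality in $W$ and the standard Gronwall argument, using that $\psi$ is non-increasing, gives $W(t)\le \psi(t)\exp\!\bigl(\int_0^{+\infty}\beta\bigr)=e^{B(\infty)}\psi(t)$. For $p-1<q<p$ the inequality is nonlinear; after changing variable to $\sigma=\mu_u(t)$ and invoking the anisotropic isoperimetric inequality in the rearranged form $(-u^{*\prime}(\sigma))^{p-1}\le W(u^*(\sigma))/[(N\kappa_N^{1/N})^p\sigma^{p-p/N}]$, I would apply H\"older with exponents tuned so that $\beta(u^*(\sigma))^{1/(q-(p-1))}d\sigma$ and the volume factor $(|\Omega|/\kappa_N)^{(p-q)/N}$ emerge exactly in the combination defining $B(\infty)$ in \eqref{Binf}. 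A linear Gronwall step then produces \eqref{eq:est-grad}.

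To obtain \eqref{equ} I would combine \eqref{eq:est-grad} with the isoperimetric--H\"older identity
\[
(N\kappa_N^{1/N})^p\,\mu_u(t)^{p(1-1/N)}\le P_H(\{|u|>t\})^p\le W(t)(-\mu_u'(t))^{p-1},
\]
pass to $\sigma=\mu_u(t)$ so that $-\mu_u'(t)=1/(-u^{*\prime}(\sigma))$, take $(p-1)$-th roots and integrate over $[s,|\Omega|]$ using $u^*(|\Omega|)=0$; the factor $e^{B(\infty)/(p-1)}$ is precisely what the $(p-1)$-th root of \eqref{eq:est-grad} contributes. For \eqref{eq:gradlor} I would relate $(H(Du))^*(s)$ to $(-u^{*\prime}(\sigma))$ via the anisotropic perimeter of the level sets of $|u|$, insert \eqref{eq:est-grad}, and apply the one-dimensional Hardy inequalities \eqref{Hardy1d1}--\eqref{Hardy1d2} with parameter $\alpha>p'/N'-1$, which split the resulting weighted integral into the local term $s^{-\alpha-1}\int_0^s t^{\alpha-p'/N'}(\cdots)^{p'}\,dt$ and the tail term $s^{-1}\int_s^{|\Omega|}t^{-p'/N'}(\cdots)^{p'}\,dt$ appearing in the statement; the restriction $\alpha>p'/N'-1$ is exactly what makes the local integral converge at the origin.

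The main obstacle is the nonlinear Gronwall step for $q<p$: unlike the $q=p$ case, the Volterra inequality for $W$ has the sub-linear exponent $q/p$, and reproducing the \emph{exact} constant $B(\infty)$ from \eqref{Binf}, with its weight $\beta^{1/(q-(p-1))}$ and its volume normalization $(|\Omega|/\kappa_N)^{(p-q)/N}$, requires a carefully tuned H\"older decomposition in the rearranged variable $\sigma$, rather than the direct linear Gronwall that suffices when $q=p$. All the remaining ingredients are classical once this step is executed correctly.
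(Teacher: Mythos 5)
Your plan matches the paper's proof in all essential respects: the anisotropic isoperimetric inequality combined with the coarea formula linearizes the sublinear $W^{q/p}$ term at the cost of the factor $(N\kappa_N^{1/N}\mu_u(s)^{1-1/N})^{q-p}(-\mu_u'(s))^{p-q}$, then H\"older with exponents $\frac{1}{q-(p-1)}$ and $\frac{1}{p-q}$ together with the bound $\int_0^\infty\frac{-\mu_u'}{\mu_u^{1-1/N}}\le N|\Omega|^{1/N}$ produces exactly the constant $B(\infty)$ of \eqref{Binf}, and a linear Gronwall step yields \eqref{eq:est-grad}; the derivations of \eqref{equ} (by integrating the pointwise isoperimetric estimate in the rearranged variable) and of \eqref{eq:gradlor} (following the scheme of \cite{aftgrad} with the one-dimensional Hardy inequalities \eqref{Hardy1d1}--\eqref{Hardy1d2}) are likewise the paper's arguments. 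The only cosmetic difference is the order in which Gronwall and the H\"older bound on the kernel's mass are applied -- the paper runs Gronwall first and then bounds the resulting exponential, you bound the kernel's total integral by $B(\infty)$ first and then apply Gronwall -- but both orderings deliver the identical estimate.
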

\begin{proof}
  Let $u\in W^{1,p}_0(\Omega)$ be a solution to
  \eqref{eq:gen}. Using the following test function $\varphi\in
  W^{1,p}_0(\Omega)$,
\[
\varphi(x)=
\begin{cases}
 0 & |u|\le t,\\
(|u|-t)\sign u & t < |u| \le t+h, \\
 h\sign u  & t+h < |u|,
\end{cases}
\]
by the hypotheses \eqref{ellipt}, \eqref{ipb} $\div$
  \eqref{ip-c}, and the Hardy-Littlewood inequality we obtain
\begin{multline*}
-\frac{d}{dt}\int_{\{|u|>t\}}H(Du)^p \,dx \le \\ \le \int_{\{|u|>t\}}
\beta(|u|) H(Du)^q\,dx + \int_0^{\mu_u (t)}
\left((c^+)^*(\sigma)u^*(\sigma)^{p-1} +f^*(\sigma)\right)d\sigma.
\end{multline*}
By the continuity of $\beta$ we have
\[
\int_{\{|u|>t\}}
\beta(|u|) H(Du)^q\,dx =\int_t^{+\infty} \beta(s) \left( -\frac{d}{ds}
\int_{\{|u|>s\}}H(Du)^q\, dx\right)\, ds.
\]
Hence, using also the H\"older inequality we get
\begin{equation}
  \label{eq:inbeta}
\int_{|u|>t} \beta(|u|) H(Du)^q dx \le \int_t^{+\infty} \beta(s)
\left[
  \left(
    -\frac{d}{ds} \int_{|u|>s} H(D u)^p dx
  \right)^{q/p}
  (-\mu'_{u}(s))^{1-q/p}
\right] d s,
\end{equation}
and
\begin{multline*}
    \left( -\frac{d}{ds}\int_{\{|u|>t\}} H(Du)^p dx \right)^{\frac{q}{p}}
    (-\mu_u'(s))^{1-\frac q p}\le \\
\leq    \left( -\frac{d}{ds}\int_{\{|u|>t\}} H(Du) dx
    \right)^{q-p} \left( -\frac{d}{ds}
      \int_{\{|u|>s\}} H(Du)^p dx \right)(-\mu_u'(s))^{p-q}.
  \end{multline*}
  The coarea formula \eqref{fr} and the isoperimetric inequality
\eqref{isop} imply
\begin{multline}
  \label{mul:1}
  \left( -\frac{d}{ds}\int_{\{|u|>s\}} H(Du)^p dx \right)^{\frac{q}{p}}
  (-\mu_u'(s))^{1-\frac q p}\le \\ \le \left(N\kappa_N^{1/N}
    \mu_u(s)^{1-\frac 1 N}\right)^{q-p}
  \left( -\frac{d}{ds}\int_{\{|u|>s\}} H(Du)^p dx \right)
  (-\mu_u'(s))^{p-q}.
\end{multline}
So, from (\ref{eq:inbeta}) and (\ref{mul:1}) we have
\begin{multline}\label{eq:10}
  -\frac{d}{dt}\int_{\{|u|>t\}}H(Du)^p \,dx \le \\ \le
  \left(N\kappa_N^{1/N} \right)^{q-p}
  \int_t^{+\infty} \beta(s) \left( -\frac{d}{ds}
    \int_{\{|u|>s\}}H(Du)^p\, dx\right)
  \left(\frac{-\mu_u'(s)}{\mu_u(s)^{1-\frac 1 N}}\right)^{p-q}
    ds + \int_0^{\mu_u (t)} z(s)\,ds,
\end{multline}
where for sake of brevity we set $z(s)=(c^+)^*(s)u^*(s)^{p-1} +f^*(s)$.

Now, using the Gronwall Lemma and the properties of rearrangements in
\eqref{eq:10}, it follows that
\begin{equation}
\label{Grom}
-\frac{d}{dt}\int_{\{|u|>t\}}H(Du)^p \,dx \le  \int_0^{\mu_u(t)}
z(s) \exp\left\{  \left(N\kappa_N^{1/N} \right)^{q-p} \int_t^{u^*(s)}
\beta(y) \left(\frac{-\mu_u'(y)}{\mu_u(y)^{1-\frac 1 N}}\right)^{p-q} dy
\right\} ds.
\end{equation}
On the other hand, if $p-1<q<p$, using H\"older inequality we have
\begin{equation}\label{mul:4}
    \int_t^{u^*(s)}
    \beta(y) \left( \frac{-\mu_u'(y)}
    {\mu_u(y)^{1-\frac{1}{N}}}\right)^{p-q}dy \le \left[\int_t^{u^*(s)}
    \beta(y)^{\frac{1}{1-p+q}}dy\right]^{1-p+q}
    \left[ \int_t^{u^*(s)}\frac{-\mu_u'(y)}
    {\mu_u(y)^{1-\frac{1}{N}}}dy \right]^{p-q}.
\end{equation}
(Observe that last inequality is trivial if $q=p$). Furthermore, by
the properties of the distribution function $\mu$ of $u$, we have
\begin{equation}\label{equ:3}
\int_t^{u^*(s)}\frac{-\mu_u'(y)}
    {\mu_u(y)^{1-\frac{1}{N}}}dy \leq \int_0^{+\infty}\frac{-\mu_u'(y)}
    {\mu_u(y)^{1-\frac{1}{N}}}dy \leq N|\Omega|^{\frac 1 N}.
  \end{equation}

Using (\ref{mul:4}) and (\ref{equ:3}) in (\ref{Grom}), we get
\begin{multline}\label{mul:5}
- \frac{d}{dt}\int_{\{|u|>t\}} H(Du)^p dx \le
\int_0^{\mu(t)} z(s) \exp{
     \left\{\left(\frac{|\Omega|}{\kappa_N}\right)^{\frac{p-q}{N}}
      \left[\int_t^{u^*(s)}
    \left(\beta(y)\right)^{\frac{1}{1-p+q}}dy\right]^{1-p+q}
\right\}}ds \le \\
\le e^{B(\infty)}
\int_0^{\mu_u(t)} [(c^+)^*(s)u^*(s)^{p-1}+ f^*(s)]ds,
\end{multline}
where last inequality follows by \eqref{Binf}, and $B(\infty)$ is
finite by the assumption on $\beta$. This proves the inequality
\eqref{eq:est-grad}.

In order to show \eqref{equ}, using similarly as before the H\"older
inequality, the coarea formula and the isoperimetric inequality in the
left-hand side of \eqref{mul:5}, we get that
\[
(-\mu_u'(t))^{-1} \le e^{\frac{1}{p-1}B(\infty)}
\left(N\kappa_N^{1/N}\right)^{-p'}
\mu_u(t)^{-\frac {p'}{N'}}
\left(\int_0^{\mu_u(t)} [(c^+)^*(s)u^*(s)^{p-1}+ f^*(s)]ds\right)^{1/(p-1)}.
\]
Integrating between $s$ and $|\Omega|$, we get \eqref{equ}.

Finally, following the argument contained in \cite{aftgrad}, we get
last inequality \eqref{eq:gradlor}.
\end{proof}

In order to get existence and regularity results for \eqref{eq:gen},
we will consider the approximated problems 
  \begin{equation}\label{eq:gen-app}
  \left\{
    \begin{array}{ll}
      -\divergenza{a(x,u_n,Du_n)}=b_n(x,u_n,Du_n) + g_n(x,u_n) +
      f_n(x) & \text{in }\Omega, \\
      u_n = 0 & \text{on }\de\Omega,
    \end{array}
  \right.
\end{equation}
where $b_n(x,s,\xi)=T_n(b(x,s,\xi))$, $g_n(x,s)=T_n(g(x,s))$,
$f_n(x)=T_n(f(x))$, and $T_n(s)=\max\{-n,\min\{s,n\}\}$ is the standard 
truncature function. Under the assumptions \eqref{ellipt} $\div$
\eqref{ip-c}, the existence of a weak solution $u_n \in
W_0^{1,p}(\Omega)$ to problem \eqref{eq:gen-app} follows by the
classical Leray-Lions result (see \cite{ll}). Moreover, such solution
is bounded. 

Now we use the inequalities proved in Theorem \ref{stima0} in order to
obtain some a priori estimates for problems 
\eqref{eq:gen-app}. As stated in the introduction, an additional
assumption on the smallness of the value $\lambda$, depending on the
summability of $f$, is needed.
\begin{theo}\label{stime1}
Suppose that the hypotheses \eqref{ellipt} $\div$ \eqref{ip-c},
\eqref{Binf} hold. Let $f\in L(m,\sigma)$, with  $1< m < \frac N
p$ and $\max\left\{\frac{1}{p-1},1\right\} \le \sigma \le +\infty $,
and
\begin{equation*}
0\le \lambda < \lambda(m),
\end{equation*}
with $\lambda(m)$ as in \eqref{def-lambda}.
Then, the weak solutions $u_n$ of \eqref{eq:gen-app}
are such that
\begin{equation}
  \label{reg}
\norm{ |u_n|^{p-1} }_{\frac{Nm}{N-mp},\,\sigma}\le C \|f\|_{m,\sigma},
\end{equation}
for some positive constant $C$ independent of $n$.
\end{theo}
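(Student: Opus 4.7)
The plan is to apply Theorem~\ref{stima0} to each $u_n$. Each approximating solution is bounded and belongs to $W^{1,p}_0(\Omega)$ by the Leray--Lions theorem, and the truncations $b_n,g_n,f_n$ still satisfy \eqref{ipb}--\eqref{ip-c} with the same $\beta$, $c$, $d$; thus \eqref{equ} applies to each $u_n$ with
\[
Z_n(t):=\int_0^t\bigl[(c^+)^*(r)\,u_n^*(r)^{p-1}+f_n^*(r)\bigr]\,dr
\]
inside. Writing $m^\#:=\frac{Nm}{N-mp}$, $\rho:=m^\#(p-1)$ and $\gamma:=\sigma(p-1)$, one has $1/\rho>0$ (since $m<N/p$), $\gamma\ge 1$ (by the hypothesis $\sigma\ge\max\{(p-1)^{-1},1\}$), and the identity $\|u_n^{p-1}\|_{m^\#,\sigma}^{\sigma}=\|u_n\|_{\rho,\gamma}^{\gamma}$.

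I would then raise \eqref{equ} to the power $\gamma$, weight by $s^{\gamma/\rho-1}$, and integrate over $(0,|\Omega|)$; the left-hand side becomes $\|u_n^{p-1}\|_{m^\#,\sigma}^{\sigma}$. Applying the Hardy-type inequality \eqref{Hardy1d2} with exponent $1/\rho$ and order $\gamma$, and using the identity $(p-1)p'=p$, the resulting power of $t$ simplifies to $\sigma(1-m)/m-1$, yielding
\[
\|u_n^{p-1}\|_{m^\#,\sigma}^{\sigma}\le K^{\gamma}\rho^{\gamma}\int_0^{|\Omega|}\bigl[t^{-(m-1)/m}Z_n(t)\bigr]^{\sigma}\frac{dt}{t},\qquad K=e^{B(\infty)/(p-1)}(N\kappa_N^{1/N})^{-p'}.
\]

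Taking $\sigma$-th roots, I would use the pointwise bound $(c^+)^*(r)\le\lambda\kappa_N^{p/N}\,r^{-p/N}$, which follows from \eqref{ip-c} since the one-dimensional rearrangement of $\lambda/H^o(x)^p$ on $\Omega^\star$ equals $\lambda\kappa_N^{p/N}\,s^{-p/N}$. Splitting $Z_n(t)\le\lambda\kappa_N^{p/N}\int_0^t r^{-p/N}u_n^*(r)^{p-1}\,dr+tf_n^{**}(t)$ and applying Minkowski's inequality in $L^{\sigma}(dt/t)$ (valid since $\sigma\ge 1$) separates two contributions. The datum piece equals $\|t^{1/m}f_n^{**}\|_{L^{\sigma}(dt/t)}=\|f_n\|_{(m,\sigma)}\le\frac{m}{m-1}\|f\|_{m,\sigma}$. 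The self-referential piece is controlled by \eqref{Hardy1d1} with parameter $(m-1)/m$, order $\sigma$ and $\psi(r)=r^{-p/N}u_n^*(r)^{p-1}$; the key simplification $\frac{1}{m}-\frac{p}{N}=\frac{1}{m^\#}$ converts it into $\frac{m}{m-1}\|u_n^{p-1}\|_{m^\#,\sigma}$.

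Collecting the pieces gives a self-referential inequality of the form $\|u_n^{p-1}\|_{m^\#,\sigma}\le\Lambda(\lambda)\|u_n^{p-1}\|_{m^\#,\sigma}+C\|f\|_{m,\sigma}$. The main obstacle is the algebraic bookkeeping: substituting $K^{p-1}=e^{B(\infty)}(N\kappa_N^{1/N})^{-p}$, $\rho^{p-1}=[Nm(p-1)/(N-mp)]^{p-1}$ together with the factors $\lambda\kappa_N^{p/N}$ and $m/(m-1)$ produced above, one must verify that $\Lambda(\lambda)$ simplifies exactly to $\lambda/\lambda(m)$ with $\lambda(m)$ as in \eqref{def-lambda}. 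Under the hypothesis $\lambda<\lambda(m)$ this gives $\Lambda(\lambda)<1$, so the self-reference can be absorbed into the left-hand side to conclude \eqref{reg} with $C$ independent of $n$. The use of Minkowski (rather than the subadditivity $(a+b)^\sigma\le 2^{\sigma-1}(a^\sigma+b^\sigma)$) is essential to avoid a spurious $2^{\sigma-1}$ factor that would strictly shrink the admissible range of $\lambda$.
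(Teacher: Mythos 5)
Your plan reproduces the paper's argument for $\sigma<+\infty$: apply \eqref{equ} to $u_n$, raise to the power $\sigma(p-1)$, insert a suitable power of $s$, integrate, and pass through the two one-dimensional Hardy inequalities \eqref{Hardy1d2}--\eqref{Hardy1d1}. The only cosmetic difference is that you split $Z_n(t)$ by Minkowski \emph{before} applying \eqref{Hardy1d1}, whereas the paper applies \eqref{Hardy1d1} to the whole $\int_0^s z$ and then uses Minkowski on $z=(c^+)^*u_n^{*\,p-1}+f_n^*$; both orderings produce exactly the factor $m/(m-1)$ in front of the self-referential term, so $\Lambda(\lambda)=\lambda/\lambda(m)$ comes out the same and the absorption under $\lambda<\lambda(m)$ goes through. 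Your remark that Minkowski (not the crude $(a+b)^\sigma\le 2^{\sigma-1}(a^\sigma+b^\sigma)$ bound) is needed to preserve sharpness of $\lambda(m)$ is correct and is exactly why the paper uses Minkowski.

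The one genuine gap is the endpoint $\sigma=+\infty$, which is included in the theorem's hypotheses. Your scheme of raising to the power $\gamma=\sigma(p-1)$, weighting by $s^{\gamma/\rho-1}$, and integrating does not make sense there, and \eqref{Hardy1d2}--\eqref{Hardy1d1} are stated only for finite secondary exponent. The paper treats $\sigma=+\infty$ separately and more simply: starting from \eqref{equ}, one uses $(c^+)^*(\tau)\le\lambda\kappa_N^{p/N}\tau^{-p/N}$ and $u_n^{*}(\tau)^{p-1}\le\|u_n^{p-1}\|_{\frac{Nm}{N-mp},\infty}\,\tau^{-\frac{N-mp}{Nm}}$, $f_n^*(\tau)\le\|f\|_{m,\infty}\tau^{-1/m}$ pointwise inside the double integral, computes the resulting explicit integral in $s$, and recognizes the coefficient $\lambda/\lambda(m)$ on the self-referential piece. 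You should add a short argument of this type (or a limiting argument) to cover $\sigma=+\infty$.
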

\begin{proof}
  We first consider the case $\sigma=+\infty$.
  Problem \eqref{eq:gen-app} verifies the assumptions of Theorem
  \ref{stima0}. Hence, we can use the inequality \eqref{equ} for
  $u_n$. Recalling that
  $(c^+)^*(\tau)\le \lambda \kappa_N^{\frac p N}
  \tau^{-\frac p N}$, $\tau\in ]0,|\Omega|[$, we obtain that
  \begin{multline}
    \label{eqm:3}
    u_n^*(s)^{p-1} \le e^{B(\infty)}(N\kappa_N^{\frac 1 N})^{-p}
    \left\{
      \int_s^{|\Omega|} t^{\frac {-p'}{N'}}
      \left[
        \int_0^t \left(
        \lambda \kappa_N^{\frac p N} \tau ^{-\frac p N}
        (u_n^*(\tau))^{p-1} +f_n^*(\tau)\right) d\tau
      \right]^{\frac{1}{p-1}} dt
    \right\}^{p-1}
    \le
    \\
    \le
     e^{B(\infty)}(N\kappa_N^{\frac 1 N})^{-p}
    \left\{
      \int_s^{|\Omega|} t^{\frac {-p'}{N'}}
      \left[
        \int_0^t \left(
        \lambda \kappa_N^{\frac p N}
        \|u_n^{p-1}\|_{\frac{Nm}{N-mp},\infty}
        +\|f_n\|_{m,\infty}\right)
      \tau^{-\frac 1 m}
      d\tau \right]^{\frac{1}{p-1}} dt
    \right\}^{p-1}.
    \end{multline}
Last inequality follows simply by the definition of the Lorentz
norms.

Hence, from \eqref{eqm:3}, recalling also that $|f_n|\le |f|$, we get
\begin{multline*}
  u^*_n(s)^{p-1} \le \left(
    \lambda e^{B(\infty)}N^{-p}
    \|u_n^{p-1}\|_{\frac{Nm}{N-mp},\infty}
    + C \|f\|_{m,\infty} \right) \left[\int_s^{|\Omega|} t^{-\frac{p'}{N'}}
    \left(\int_0^t \tau^{-\frac 1 m} d\tau
    \right)^{\frac{1}{p-1}} dt \right]^{p-1} \le \\ \le \left( \frac
    {\lambda}{\lambda(m)} \|u_n^{p-1}\|_{\frac{Nm}{N-mp},\infty} +
    C \|f\|_{m,\infty} \right) s^{-\frac{N-mp}{Nm}},
\end{multline*}
where $C$ is a constant which does not depend on $n$. Finally, being
$\lambda<\lambda(m)$, the above inequality gives that
\[
 \norm{ |u_n|^{p-1}}_{\frac{Nm}{N-mp},\infty} \le C \|f\|_{m,\infty},
\]
and we get the thesis when $\sigma=\infty$. Now, suppose that
$\max\left\{\frac{1}{p-1},1\right\} \le\sigma<+\infty$. For the sake of brevity, we denote with $z(\tau)$
the function $z(\tau)=
(c^+)^*(\tau)(u_n^*(\tau))^{p-1}+f_n^*(\tau)$. As before, from
\eqref{equ} applied to $u_n$ we obtain that
\begin{multline}\label{eq:dim1}
  \norm{ |u_n|^{p-1}}^\sigma_{\alpha,\sigma}
  =\int_0^{|\Omega|} s^{\frac {\sigma}{
      \alpha} } u_n^*(s)^{\sigma(p-1)}
  \frac{ds}{s} \le \\ \le
  e^{\sigma\, B(\infty)} \left(N\kappa_N^{1/N}\right)^{-p\sigma}
  \int_0^{|\Omega|}
  \left(s^{\frac {1} {\alpha(p-1)} } \int_s^{|\Omega|}
    t^{-\frac{p'}{N'}} \left(\int_0^t
      z(\tau) d\tau\right) ^{\frac{1}{p-1}}
    \,d t \right)^{\sigma(p-1)} \frac{ds}{s}  \le \\ \le
  e^{\sigma\,B(\infty)}\left(N\kappa_N^{1/N}\right)^{-p \sigma}
  [\alpha(p-1)]^{\sigma(p-1)}
  \int_0^{|\Omega|} \left[s^{
      \frac 1 \alpha -1 +\frac{p}{N} }
    \int_0^s
    z(\tau) d\tau \right]^{\sigma}
  \frac { ds }{ s },
\end{multline}
where last inequality is obtained by using \eqref{Hardy1d2}, being
$\sigma(p-1) \ge 1 $.

Let us observe that
\begin{equation}
  \label{eq:dimneg}
  \frac 1 \alpha -1 +\frac{p}{N} < 0 \iff
  \alpha>\frac{N}{N-p}.
\end{equation}
If this is the case, being $\sigma\ge 1$, by \eqref{Hardy1d1}
we get from \eqref{eq:dim1} that
\begin{multline}\label{dim:K}
  \norm{ |u_n|^{p-1}}_{\alpha,\sigma}^\sigma\le \\ \le e^{\sigma\,B(\infty)}
  (N\kappa_N^{1/N})^{-p\sigma}[\alpha(p-1)]^{\sigma(p-1)}
  \left( 1-\frac{p}{N} -\frac 1 \alpha
  \right)^{-\sigma} \int_0^{|\Omega|}\left[ s^{\frac
      {1}{\alpha}+\frac p N} z(s)\right]^{
    {\sigma}}\frac{ds}{s}=\\= K \int_0^{|\Omega|}\left[ s^{\frac
       {1}{\alpha}+\frac p N} z(s)\right]^{
     {\sigma}}\frac{ds}{s}.
 \end{multline}
 Hence, using the Minkowski inequality, we get that
 \begin{multline}\label{dim:3}
   \frac{1}{K^{\frac{1}{\sigma}}} \norm{|u_n|^{p-1}}_{\alpha,\sigma} \le
   \left(\int_0^{|\Omega|} \left[ s^{\frac{1}{\alpha}+\frac p N}
       z(s)\right]^{\sigma}
     \frac{ds}{s}\right)^{\frac 1 \sigma} \le \\
   \left(\int_0^{|\Omega|} \left(s^{
         \frac{p}{N}}
       (c^+)^*(s)\right)^{{\sigma}} s^{\frac \sigma \alpha} u_n^*(s)^{\sigma(p-1)}
     \,\frac{ds}{s} \right)^{\frac 1 \sigma}
   + \left( \int_0^{|\Omega|} \left(s^{ \frac{1}{\alpha}+\frac {p}{N}}
       f_n^*(s)\right)^\sigma \frac{ds}{s} \right)^{\frac 1 \sigma}.
 \end{multline}
 Being $(c^+)^*(s)\le \lambda \kappa_N^{\frac{p}{N}} s^{-\frac{p}{N}}$,
 writing explicitly the value of $K$ in \eqref{dim:K}, 
 \eqref{dim:3} implies
 \[
 \kappa_N^{\frac p N}\left(\frac{ \alpha(N-p)-N }
   { e ^{B(\infty)}  N^{1-p}  \alpha^p(p-1)^{p-1} }
   -\lambda\right)
 \norm{|u_n|^{p-1}}_{\alpha,\sigma} \le
 \|f_n\|_{\frac{N\alpha}{N+\alpha p}, \sigma }\,.
 \]
 Hence, for $m=\frac{N\alpha}{N +\alpha p}$ we have that
 $\alpha=\frac{Nm}{N-mp}$ verifies \eqref{eq:dimneg}, and for
 \[
 \lambda < e^{-B(\infty)}\frac {\alpha(N-p)-N}{N^{1-p}\alpha^p(p-1)^{p-1}} =
 e^{-B(\infty)}N \frac{(N-mp)^{p-1}(m-1)}{m^p(p-1)^{p-1}}=\lambda(m),
 \]
 we get
 \[
 \norm{|u_n|^{p-1}}_{\frac{Nm}{N-mp},\sigma}\le C\|f_n\|_{m,\sigma},
 \]
 for some constant $C$. Being $|f_n|\le |f|$, we get the thesis.
\end{proof}

\begin{rem}
  \label{remstime}
  We observe that, in particular, the result obtained in Theorem
  \ref{stime1} provides estimates in terms of suitable Lebesgue norms
  of $u_n$, and $f$. Indeed, choosing $\sigma=
  \frac{Nm}{N-mp}$ in \eqref{reg}, and supposing that
  $\frac{Nm}{N-mp}\ge \max\{\frac{1}{p-1},1\} $, being 
  $L^m(\Omega)\subset L\left(m,\frac{Nm}{N-mp}\right)$, if
  $\lambda<\lambda(m)$ we have that 
  \begin{equation}\label{regrem}
  \left\| |u_n|^{p-1} \right\|_{\frac{Nm}{N-mp}} \le C\|f\|_m.
  \end{equation}
  Clearly, if $p\ge 2$ no further assumption on $m\in\left]1,\frac N
    p\right[$ 
  is needed to get \eqref{regrem}. Otherwise, we have to require that
  $m\ge \frac{(p^*)'}{p}$. This additional hypothesis is due only to
  technical reasons, but, when $\lambda<\lambda(m)$, the estimate
  \eqref{regrem} holds also in the case $1<m<\frac{(p^*)'}{p}$. For sake
  of completeness, we sketch the proof of \eqref{regrem} in the general
  case. We use the same notation of Theorem \ref{stime1}.

  Let $\eps>0$, and $\alpha>0$. By \eqref{mul:5} we have:
  \[
  -\frac{d}{dt} \int_{\{|u_n|> t\}} \frac{H(Du_n)^p}{(\eps+|u_n|)^\alpha}
  dx \le e^{B(\infty)}(1+t)^{-\alpha} \int_0^{\mu_{u_n}(t)} [(c^+)^*
  u_n^*(\tau)^{p-1} +f^*(\tau)]\,
  d\tau,
  \]
  and
  \begin{equation}
    \label{hardyeps}
  \int_\Omega \frac{H(Du_n)^p}{(\eps+|u_n|)^\alpha} dx \le e^{B(\infty)}
  \frac{1}{1-\alpha} \int_0^{|\Omega|}
  [(\eps+u_n^*(s))^{1-\alpha}-\eps^{1-\alpha}] [(c^+)^* u_n^*(s)^{p-1} +f^*(s)]ds.
  \end{equation}
  Now we recall that for any $\eps>0$ sufficiently small and $0<\gamma<1$,
  the following inequality holds:
  \[
  x^{p-1}[(\eps+x)^{p\gamma-(p-1)}-\eps^{p\gamma-(p-1)}] \le
  [(\eps+x)^{\gamma}-\eps^{\gamma}]^p, \quad \forall x\ge 0.
  \]
  Then we have, for $0<\alpha<1$,
  \begin{multline}\label{hardyeps2}
  \int_0^{|\Omega|} (c^+)^* u_n^*(s)^{p-1} [(\eps+u_n^*
  (s))^{1-\alpha}-\eps^{1-\alpha}] 
  ds \le \\ \le \lambda \kappa_N^{\frac p N} \int_0^{|\Omega|}
  \frac{[(\eps+u_n^*(s))^{1-\frac{\alpha}{p}}-\eps^{1-\frac{\alpha}{p}}]^p}{s^{\frac
      p N}} ds = \lambda \kappa_N^{\frac p N} \| (\eps+|u_n|)^{1-\frac
  \alpha p} -\eps ^{1-\frac \alpha p} \|^p_{p^*,p}\;.
  \end{multline}
  Moreover,
  \begin{equation}
    \label{hardyeps3}
    \int_0^{|\Omega|} [(\eps+u_n^*(s))^{1-\alpha}-\eps^{1-\alpha}]
    f^*(s)\,ds \le C \|f\|_m \|
    |u_n|^{1-\alpha}\|_{m'}.    
  \end{equation}

  As matter of fact, we have that by Hardy inequality
  \eqref{Hardy}, 
  \[
  \int_\Omega \frac{H(Du_n)^p}{(\eps+|u_n|)^\alpha} dx =
  \left(\frac{p}{p-\alpha}\right)^p \int_\Omega
  H(D((\eps+|u_n|)^{1-\frac \alpha p} ) )^p dx\ge
  \left(\frac{N-p}{p-\alpha}\right)^p \|
  ((\eps+|u_n|)^{1-\frac \alpha p}-\eps^{1-\frac \alpha p} )\|^p_{p^*,p}.
  \]
  Using the above inequality, \eqref{hardyeps2} and
  \eqref{hardyeps3} in \eqref{hardyeps} we have that, by the
  properties of rearrangements and the Fatou lemma,
  \[
  \left(  \left(\frac{N-p}{p-\alpha}\right)^p
    -\frac{\lambda}{1-\alpha} e^{B(\infty)} \right)
  \|u_n^{1-\frac \alpha p}\|_{p^*,p}^p \le C \|f\|_m \|
  u_n^{1-\alpha}\|_{m'}.
  \]
  Let choose $\alpha$ such that $(1-\alpha)m'=(1-\alpha/p)p^*$, 
  after some computations we get that, being $m<\frac{(p^*)'}{p}$, then
  $0<\alpha<1$ and
  \[
  \left(\lambda(m)-\lambda \right)\||u_n|^{p-1}\|_{\frac{Nm}{N-mp}} 
  \le C \left(\|f\|_m\right),
  \]
  and for $\lambda<\lambda(m)$ we get the estimate
  \eqref{regrem}.

  Finally, the above estimate gives also a uniform bound for $Du_n$, that is
  \begin{equation}
    \label{eeee}
  \| H(Du_n)^{p-1} \|_{\frac{Nm}{N-m}} \le C(\|f\|_m).
  \end{equation}
  Clearly, if $m>\max\{1,\bar m\}$, $\bar m=\frac{N}{N(p-1)+1}$,
  this follows from \eqref{regrem} by Sobolev inequality. Otherwise,
  the above computations give that for $\eps>0$
  \[
  \int_\Omega \frac{H(Du_n)^{p}}{(\eps+|u_n|)^\alpha} dx \le C \|f\|_m
  \|(\eps+|u_n|)^{1-\alpha}\|_{m'}.
  \]
  Hence, reasoning as in \cite{lmp}, H\"older and Sobolev
  inequalities give \eqref{eeee}.
  
\end{rem}

\begin{rem} We explicitly observe that if
  $m=(p^*)'=\frac{Np}{Np-N+p}$, then
  \[
  \lambda(m)=\left(\frac{N-p}{p}\right)^p e^{-B(\infty)}=\Lambda_N
  e^{-B(\infty)}.
  \]
\end{rem}
Next proposition will be an useful tool to pass to the limit in the
approximating problems \eqref{eq:gen-app}, and is a consequence of the
obtained estimates on $u_n$.
\begin{prop}
  \label{prop:bound}
  Under the hypothesis of Theorem \ref{stime1}, for any $t>0$ it holds
  that
  \begin{equation}
    \label{eq:zero}
    \int_0^{|\Omega|} [(c^+)^*(s)u_n ^*(s)^{p-1} +f_n^*(s)] ds \le C
    \|f\|_{m,\sigma},
  \end{equation}
  and
  \begin{equation}
    \label{eq:uno}
  \int_{\{|u_n|>t\}} |b_n(x,u_n,Du_n)| dx \le  C
  t^{-\alpha},
  \end{equation}
  where $\alpha = \frac{(p-1)[N(m-1)+m(p-q)]}{N-mp}>0$, and $C$ denotes a
  positive constant independent of $n$ and $t$.
\end{prop}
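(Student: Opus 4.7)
The plan is to derive both bounds from the Lorentz-regularity estimate of Theorem \ref{stime1} combined with the differential inequality \eqref{eq:est-grad} of Theorem \ref{stima0}.

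For \eqref{eq:zero}, I would translate Theorem \ref{stime1} via the Lorentz embedding $L(Nm/(N-mp),\sigma)\subset L(Nm/(N-mp),\infty)$ into the pointwise bound $u_n^*(s)^{p-1}\le C\|f\|_{m,\sigma}\,s^{-(N-mp)/(Nm)}$, and analogously use $L(m,\sigma)\subset L(m,\infty)$ to get $f_n^*(s)\le f^*(s)\le C\|f\|_{m,\sigma}\,s^{-1/m}$. Combined with $(c^+)^*(s)\le \lambda\kappa_N^{p/N}s^{-p/N}$ from \eqref{ip-c}, the integrand $(c^+)^*(s)u_n^*(s)^{p-1}+f_n^*(s)$ is dominated by $C\|f\|_{m,\sigma}\,s^{-1/m}$, which is integrable on $(0,|\Omega|)$ since $m>1$. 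This gives \eqref{eq:zero} immediately.

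For \eqref{eq:uno}, starting from $|b_n|\le \beta(|u_n|)H(Du_n)^q$, the layer-cake identity combined with the Hölder-type bound \eqref{mul:1} yields
\[
\int_{\{|u_n|>t\}}|b_n|\,dx\le \int_t^{+\infty}\beta(s)\,J_n(s)^{q/p}(-\mu_n'(s))^{1-q/p}\,ds,
\]
with $J_n(s)=-\frac{d}{ds}\int_{\{|u_n|>s\}}H(Du_n)^p\,dx$. For $p-1<q<p$, I would apply Hölder in $s$ with exponents $1/(q-p+1)$ and $1/(p-q)$ to absorb the $\beta$-factor into the finite constant $B(\infty)$, leaving $\bigl(\int_t^\infty J_n^{q/[p(p-q)]}(-\mu_n'(s))^{1/p}\,ds\bigr)^{p-q}$. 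A second Hölder with exponents $p/(p-1)$ and $p$ splits this into a power of $\int_t^\infty J_n^{q/[(p-1)(p-q)]}\,ds$ times $\mu_n(t)^{1/p}$ (using that $\int_t^\infty-\mu_n'\,ds=\mu_n(t)$). Substituting the pointwise decay $\mu_n(s)\le C s^{-Nm(p-1)/(N-mp)}$ from Theorem \ref{stime1} and the bound $J_n(s)\le C\mu_n(s)^{(m-1)/m}\le C s^{-N(p-1)(m-1)/(N-mp)}$ (which follows from \eqref{eq:est-grad} together with part (1) above), the $s$-integrations and exponent arithmetic collapse the product to $Ct^{-\alpha}$ with $\alpha=(p-1)[N(m-1)+m(p-q)]/(N-mp)$. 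The degenerate case $q=p$ is handled separately: the splitting with $1/(p-q)=\infty$ reduces to $\int_t^\infty\beta(s)J_n(s)\,ds\le \bigl(\esssup_{s>t}J_n(s)\bigr)\int_t^\infty\beta\,ds\le C t^{-N(p-1)(m-1)/(N-mp)}$, which agrees with $\alpha|_{q=p}$.

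The main obstacle is the two-step Hölder bookkeeping in the second estimate: the splittings must be chosen so that (i) the $\beta$-factor lands precisely at the exponent $1/(q-p+1)$ dictated by \eqref{Binf}, (ii) the $J_n$-factor becomes integrable in $s$, and (iii) the $-\mu_n'$-factor collapses to the integrable quantity $\mu_n(t)$. Once the splittings are in place, matching the resulting product to the stated $\alpha$ is a routine algebraic verification, and positivity of $\alpha$ follows from $m>1$ and $q\le p$.
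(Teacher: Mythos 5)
Your argument for \eqref{eq:zero} is exactly the paper's: the Lorentz bound of Theorem \ref{stime1} gives $u_n^*(s)^{p-1}\le C\|f\|_{m,\sigma} s^{-(N-mp)/(Nm)}$, which together with $(c^+)^*(s)\le\lambda\kappa_N^{p/N}s^{-p/N}$ and $f_n^*(s)\le C\|f\|_{m,\sigma}s^{-1/m}$ makes the integrand $O(s^{-1/m})$, integrable near $0$ because $m>1$.

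For \eqref{eq:uno} your starting point $\int_{\{|u_n|>t\}}|b_n|\le\int_t^{+\infty}\beta(s)J_n(s)^{q/p}(-\mu_n')^{1-q/p}\,ds$ is the paper's \eqref{eq:inbeta} (not \eqref{mul:1}, which you cite, and which is actually the isoperimetric step you never invoke). The problem is in your two-stage H\"older. After the second H\"older you are left with $\left(\int_t^{\infty}J_n^{\,q/[(p-1)(p-q)]}\,ds\right)^{(p-1)/p}\mu_n(t)^{1/p}$, and the only available control on $J_n$ is the pointwise bound $J_n(s)\le C\mu_n(s)^{(m-1)/m}\le Cs^{-N(p-1)(m-1)/(N-mp)}$. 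Raising it to the power $q/[(p-1)(p-q)]$ gives an integrand $\sim s^{-qN(m-1)/[(N-mp)(p-q)]}$, and the exponent exceeds $1$ precisely when $qN(m-1)>(N-mp)(p-q)$. As $q\to(p-1)^+$ this becomes $(p-1)N(m-1)>N-mp$, i.e.\ $m>(p^*)'$. Theorem \ref{stime1} and the present proposition, however, are stated for all $1<m<N/p$, and Theorem \ref{theo:entr} is specifically about $1<m<(p^*)'$. For example, with $N=3$, $p=2$, $q=1.1$, $m=1.1$ one has $qN(m-1)=0.33<(N-mp)(p-q)=0.72$, so your integral $\int_t^{\infty}J_n^{\,q/[(p-1)(p-q)]}\,ds$ diverges and the argument collapses exactly in the parameter regime the statement is most needed. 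The gap is not merely a missing verification: the split you chose simply does not close.

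The paper avoids this by first applying \eqref{mul:1}, which uses the isoperimetric inequality to rewrite $J_n^{q/p}(-\mu_n')^{1-q/p}\le C\,\mu_n^{-(p-q)/N'}J_n(-\mu_n')^{p-q}$. This raises $J_n$ to the first power, and since $\mu_n$ is decreasing, $J_n(s)\le e^{B(\infty)}\int_0^{\mu_n(s)}z\le e^{B(\infty)}\int_0^{\mu_n(t)}z\le Ct^{-N(p-1)(m-1)/(N-mp)}$ uniformly in $s\ge t$, so this factor is simply pulled out of the integral. The remaining integral, after the same H\"older split $1/(q-p+1)$, $1/(p-q)$ on $\beta$ versus the $\mu_n$-terms, is $\left(\int_t^{\infty}\mu_n^{-1/N'}(-\mu_n')\,ds\right)^{p-q}=\left(N\mu_n(t)^{1/N}\right)^{p-q}$, a total derivative that always converges. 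Combining the two powers of $t$ gives $\alpha$. In short: the isoperimetric step \eqref{mul:1} is the crucial ingredient you omitted, and without it the $s$-integral cannot be controlled for small $m$ and $q$.
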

\begin{proof}
The estimate \eqref{eq:zero} follows immediately from \eqref{reg} and
the definition of Lorentz space.

In order to show \eqref{eq:uno}, let $t>0$. Reasoning as in Theorem
\ref{stima0}, and using the same notation, we have that
  \begin{multline}
    \label{mul:betaest}
    \int_{\{|u_n|>t\}} |b_n(x,u_n,Du_n)| dx \le
    \int_{\{|u_n|>t\}} \beta(|u_n|)H(Du_n)^q dx \le \\ \le
    \int_t^{+\infty} \beta(s) \left[
      \left(- \frac{d}{ds} \int_{\{|u_n|>s\}} H(Du_n)^p dx
      \right)^{\frac q p} (-\mu_{u_n}'(s))^{1-\frac q p}
    \right]  ds \le \\ \le C \int_t^{+\infty} \beta(s)
      \left( \int_0^{\mu_{u_n}(s)} \left[(c^+)^*(\tau)u_n^*(\tau)^{p-1}
          +f^*(\tau)  \right] d\tau \right)
      \left(\mu_{u_n}(s)\right)^{-\frac {p-q}{N'}} (-\mu_{u_n}'(s))^{p-q}
    ds,
  \end{multline}
  where last inequality follows from \eqref{eq:est-grad} and
  \eqref{equ}.  We always denote with $C$ a constant
  independent of $n$. As matter of fact, the properties of Lorentz
  spaces give that $f\in L(m,\infty)$ and, by the estimate
  \eqref{reg}, $|u_n|^{p-1}$ are uniformly bounded in
  $L\left(\frac{mN}{N-mp},\infty\right)$. This implies that, for $s\ge
  t$,
  \begin{multline}
    \label{mul:beta2}
    \int_0^{\mu_{u_n}(s)} \left[ (c^+)^*(\tau)u_n^*(\tau)^{p-1} +f^*(\tau)
    \right]d\tau \le
    C \int_0^{\mu_{u_n}(t)} \left[ \tau^{-\frac p N}  \tau^{-\frac{N-mp}{Nm}}
      +\tau^{- \frac 1 m} \right] d\tau=\\=C \mu_{u_n}(t)^{1-\frac 1 m}
    \le
    C t^{-\frac{N(p-1)(m-1)}{N-mp}}.
  \end{multline}
  Hence, applying \eqref{mul:beta2} in \eqref{mul:betaest}, we get
  \[
    \int_{\{|u_n|>t\}} |b_n(x,u_n,Du_n)| dx \le C
    t^{-\frac{N (p-1) (m-1) }{N-mp}} \int_t^{\infty} \beta(s)
    \left(\mu_{u_n}(s)\right)^{-\frac {p-q}{N'}}
    (-\mu_{u_n}'(s))^{p-q} ds.
  \]
Hence, if $q=p$, the thesis follows immediately by
\eqref{Binf}. Otherwise, using the  H\"older inequality, the
hypothesis \eqref{Binf} and again the boundedness of $u_n^{p-1}$ in
$L(\frac{Nm}{N-mp},\infty)$, we get that
  \begin{multline*}
    \int_{\{|u_n|>t\}} |b_n(x,u_n,Du_n)| dx
    \le C t^{-\frac{N(p-1)(m-1)}{N-mp}}
\left(    \int_t^{+\infty} \beta(s)^{\frac{1}{q-(p-1)}} ds \right)^{q-(p-1)}
    \left(\int_0^{\mu_{u_n}(t)} s^{-\frac{1}{N'}} ds \right)^{p-q}
    \\
    \le C t^{-\frac{N(p-1)(m-1)}{N-mp}}  \mu_{u_n}(t)^{\frac {p-q}{N}}
    \le C t^{-\alpha},
  \end{multline*}
with $\alpha = \frac{(p-1)[N(m-1)+m(p-q)]}{N-mp}>0$, and the
proposition is completely proved.
\end{proof}

Now we consider the case $f\in L(m,\sigma)$, with $1<m<(p^*)'$ and
$1< \sigma<+\infty$, and get some estimates for the Lorentz
norm of the gradient of $u_n$. 

\begin{theo}\label{stime2g}
  Suppose that the hypotheses \eqref{ellipt} $\div$ \eqref{ip-c},
  \eqref{Binf} hold. Let $f\in L(m,\sigma)$, with
  $1 < m < (p^*)'$, $p' \le \sigma
  \le +\infty$, and $0\le
  \lambda<\lambda(m)$, with $\lambda(m)$ as in \eqref{def-lambda}.
  Then, the weak solutions $u_n$ of \eqref{eq:gen-app} are such that
  \begin{equation}
    \label{reg2}
    \norm{ H(D u_n)^{p-1}}_{\frac{Nm}{N-m},\sigma}\le C \|f\|_{m,\sigma},
  \end{equation}
  for some constant $C$ independent of $n$.
\end{theo}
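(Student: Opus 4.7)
The strategy parallels the proof of Theorem \ref{stime1} but starts from the gradient inequality \eqref{eq:gradlor} of Theorem \ref{stima0} instead of \eqref{equ}. Set $z(s)=(c^+)^*(s)u_n^*(s)^{p-1}+f_n^*(s)$, $F(t)=\int_0^t z(r)\,dr$, and fix any $\alpha>p'/N'-1$. Raising \eqref{eq:gradlor} to the power $\sigma(p-1)/p$ gives
\[
[H(Du_n)^*(s)]^{\sigma(p-1)}\le C\left[\frac{A(s)}{s^{\alpha+1}}+\frac{B(s)}{s}\right]^{\tau},\qquad \tau:=\frac{\sigma(p-1)}{p}\ge 1,
\]
with $A(s)=\int_0^s t^{\alpha-p'/N'}F(t)^{p'}\,dt$ and $B(s)=\int_s^{|\Omega|}t^{-p'/N'}F(t)^{p'}\,dt$; here $\tau\ge 1$ because $\sigma\ge p'$. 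Using $(a+b)^\tau\le 2^{\tau-1}(a^\tau+b^\tau)$, the Lorentz quasinorm splits as
\[
\|H(Du_n)^{p-1}\|_{\frac{Nm}{N-m},\sigma}^\sigma=\int_0^{|\Omega|}s^{\rho-1}[H(Du_n)^*(s)]^{\sigma(p-1)}\,ds\le C(I_1+I_2),
\]
where $\rho=\sigma(N-m)/(Nm)$, $I_1=\int_0^{|\Omega|} s^{\rho-1-\tau(\alpha+1)}A(s)^\tau\,ds$ and $I_2=\int_0^{|\Omega|} s^{\rho-1-\tau}B(s)^\tau\,ds$.

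Next, to $I_1$ I would apply \eqref{Hardy1d1} with exponent $\lambda_1=\alpha+1-p(N-m)/(Nm(p-1))$, and to $I_2$ apply \eqref{Hardy1d2} with exponent $\lambda'=p(N-m)/(Nm(p-1))-1$. Both are strictly positive: $\lambda_1>0$ follows from $\alpha>p'/N'-1$ together with $m>1$ (a short comparison shows $p'/N'-1>p(N-m)/(Nm(p-1))-1$ iff $m>1$), while $\lambda'>0$ is algebraically equivalent to $m<(p^*)'$, which is precisely our hypothesis. Using the identity $p'\tau=\sigma$, a parallel computation collapses both $I_1$ and $I_2$ to a constant multiple of
\[
\int_0^{|\Omega|} s^{\sigma(1-m)/m-1}F(s)^\sigma\,ds.
\]

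A third application of \eqref{Hardy1d1}, with $\gamma=\sigma$, $\lambda_2=(m-1)/m>0$ and $\psi=z$, rewrites this as $C\int_0^{|\Omega|}(s^{1/m}z(s))^\sigma\,ds/s$. Splitting $z=(c^+)^*u_n^{*\,p-1}+f_n^*$ and using $(c^+)^*(s)\le\lambda\kappa_N^{p/N}s^{-p/N}$ (so that $s^{1/m-p/N}=s^{(N-mp)/(Nm)}$), one reads
\[
\int_0^{|\Omega|}(s^{1/m}z(s))^\sigma\frac{ds}{s}\le C\||u_n|^{p-1}\|_{\frac{Nm}{N-mp},\sigma}^\sigma+\|f\|_{m,\sigma}^\sigma\le C\|f\|_{m,\sigma}^\sigma,
\]
the last step being Theorem \ref{stime1}; this gives \eqref{reg2}. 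The case $\sigma=\infty$ is handled by the same chain in pointwise form: Theorem \ref{stime1} yields $u_n^*(s)^{p-1}\le C\|f\|_{m,\infty}s^{-(N-mp)/(Nm)}$, hence $z(s)\le C\|f\|_{m,\infty}s^{-1/m}$ and $F(t)\le C\|f\|_{m,\infty}t^{(m-1)/m}$; inserting these into \eqref{eq:gradlor} and evaluating the two elementary integrals (the tail one converges precisely because $m<(p^*)'$) gives $[H(Du_n)^*(s)]^{p-1}\le C\|f\|_{m,\infty}s^{(m-N)/(Nm)}$, i.e.\ \eqref{reg2} with $\sigma=\infty$.

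The main obstacle is securing positivity of the Hardy exponent $\lambda'$ for the tail integral $I_2$: this is exactly where the hypothesis $m<(p^*)'$ intervenes, demarcating the regime of Theorem \ref{stime2g} from that of Theorem \ref{stime1}. At the critical threshold $m=(p^*)'$ we have $\lambda'=0$ and the second Hardy step degenerates; above it one must fall back on the energy-type estimate of Theorem \ref{theo:deb}(i).
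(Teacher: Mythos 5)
Your proof is correct and follows essentially the same route as the paper's: starting from inequality \eqref{eq:gradlor} of Theorem \ref{stima0}, applying the one-dimensional Hardy inequalities \eqref{Hardy1d1} and \eqref{Hardy1d2} (your explicit tracking of the three Hardy applications and their exponents $\lambda_1,\lambda',\lambda_2$ is simply a more detailed bookkeeping of the same reductions, noting correctly that $\sigma\ge p'$ ensures $\tau\ge1$ and that $\lambda'>0$ is exactly $m<(p^*)'$), and closing via Theorem \ref{stime1}. The pointwise argument for $\sigma=\infty$ also matches the paper's.
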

\begin{proof}
  We reason similarly to the proof of Theorem \ref{stime1}.
First of all, let $\sigma=+\infty$. Then, recalling \eqref{reg} and
that $(c^+)^*(s) \le \lambda \kappa_N^{\frac p N} s^{-\frac p N}$, we
have
\[
(c^+)^*(s)u_n^*(s)^{p-1} \le C s^{-\frac 1 m}.
\]
Hence, substituting in \eqref{eq:gradlor}, and integrating, we get
that
\[
[H(Du_n)^*(s)]^{p-1} \le C s^{-\frac{N-m}{Nm}},
\]
that gives \eqref{reg2} when $\sigma=+\infty$.

In the case $\sigma<+\infty$, by \eqref{eq:gradlor} we have:
  \begin{multline*}
    \norm{ H(Du_n)^{p-1} }^\sigma_{d,\sigma}=
    \int_0^{|\Omega|} \left(s^{\frac 1 d}
      [H(Du_n)^*(s)]^{p-1} \right)^\sigma \frac{ds}{s}
    \le \\
    C \int_0^{|\Omega|} \left[
      s^{\frac{p'}{d}-\alpha-1} \int_0^s r^{\alpha-\frac{p'}{N'}} \left(
        \int_0^r z(t) dt \right)^{p'} dr \right]^{\frac{\sigma}{p'}}
    \frac{ds}{s}
    +\\ + C \int_0^{|\Omega|}\left[
      s^{\frac {p'}{d}-1} \int_s^{|\Omega|} r^{-\frac{p'}{N'}}  \left(
        \int_0^r z(t) dt \right)^{p'} dr
    \right]^{\frac{\sigma}{p'}} \frac {ds}{s},
  \end{multline*}
  with $z=(c^+)^*(u_n^*)^{p-1}+f_n^*$. Being $\alpha> \frac{p'}{N'}-1$, if
  $N'<d<p'$, using the inequalities \eqref{Hardy1d2} and
  \eqref{Hardy1d1}, we obtain that
  \begin{multline*}
    \norm{ H(Du_n)^{p-1} }^\sigma_{d,\sigma}
    \le C \int_0^{|\Omega|} \left(s^{\frac 1 d + \frac 1 N} z(s)
    \right)^{\sigma} \frac{ds}{s} \le \\
    \le C \int_0^{|\Omega|} \left(s^{\frac 1 d + \frac 1 N - \frac{p}{ N} }
      (u_n^*(s))^{p-1}\right)^{\sigma} \frac{ds}{s} + C
    \int_0^{|\Omega|} \left(s^{\frac
        1 d + \frac 1 N} f_n^*(s) \right)^{\sigma} \frac{ds}{s}.
  \end{multline*}
  Choosing $d$ such that $\frac 1 d + \frac 1 N = \frac 1 m$, we have
  that $d=\frac{Nm}{N-m}$, and $N'< d < p'$, being $1<m< (p^*)'$.

  Being $\lambda<\lambda(m)$, we can use the estimates of Theorem
  \ref{stime1}, obtaining the thesis.
\end{proof}

\section{Proofs of the existence and regularity theorems}

Now we can prove the existence and regularity results for
problem \eqref{eq:gen} stated in Section 3. Using the estimates of the
previous section, we will pass to the limit in the approximating problems
  \begin{equation}\label{eq:gen-appp}
  \left\{
    \begin{array}{ll}
      -\divergenza{a(x,u_n,Du_n)}=b_n(x,u_n,Du_n) + g_n(x,u_n) +
      f_n(x) & \text{in }\Omega, \\
      u_n = 0 & \text{on }\de\Omega,
    \end{array}
  \right.
\end{equation}
where, as in the previous section, $b_n,g_n$ and $f_n$ are the
truncates of $b,g$ and $f$, respectively.

\begin{proof}[Proof of Theorem \ref{theo:deb}]
  As usual, we show that the solutions $u_n\in W_0^{1,p}(\Omega)$ to
  problem \eqref{eq:gen-appp} found in Theorem \ref{stime1} converge
  to a weak solution of \eqref{eq:gen}, i.e. for any $\varphi\in
  W_0^{1,p}(\Omega)\cap L^{\infty}(\Omega)$ it is possible to pass to
  the limit in
  \begin{equation}\label{soldeb}
  \int_\Omega a(x,u_n,Du_n)\cdot D\varphi \, dx =
  \int_\Omega [ b_n(x,u_n,Du_n) + g_n(x,u_n) + f_n] \varphi \, dx.
\end{equation}

  By Theorem \ref{stima0}, being $u_n$ bounded, 
  \begin{multline*}
    e^{-B(\infty)} \int_\Omega H(Du_n)^p dx \le \int_0^{\infty} \left(
      \int_0^{\mu_{u_n}(t)}[(c^+)^*(s) u_n^*(s)^{p-1}+f^*(s)] \,ds \right) dt
    = \\ =
    \int _0^{|\Omega|} (-u_n^*(r))' \left( \int_0^r [(c^+)^*(s)
      u_n^*(s)^{p-1}+f^*(s)]\, ds \right) dr = \\ =
    \lambda \int_{\Omega^\star} \frac{(u_n^\star(x))^p}{H^o(x)^p} dx
    +\int_0^{|\Omega|} f^*(s) u_n^*(s) ds.
  \end{multline*}
  Then, by the Hardy inequality \eqref{Hardy} and the H\"older
  inequality we get
  \[
  e^{-B(\infty)} \int_\Omega H(Du_n)^p dx \le
  \frac{\lambda}{\Lambda_N} \int_{\Omega^\star} H(Du^\star_n)^p dx +
  \|f\|_{(p^*)',p'}\|u_n\|_{p^*,p}.
  \]
  Hence, by the P\'olya-Szeg\"o inequality we get that
  \[
  \left(e^{-B(\infty)}-\frac{\lambda}{\Lambda_N}\right)
  \int_\Omega H(Du_n)^p dx \le   \|f\|_{(p^*)',p'}\|u_n\|_{p^*,p}.
  \]
  Recalling the Remark \ref{remlor}, and being $\lambda<\Lambda_N
  e^{-B(\infty)}$, we get that $u_n$ is uniformly bounded in
  $W_0^{1,p}(\Omega)$ and, up to a subsequence,
  \begin{equation*}
    \begin{array}{ccll}
      u_n & \rightharpoonup &u &\text{weakly in }W^{1,p}_0(\Omega),\\
      g_n(x,u_n) & \rightarrow & g(x,u) & \text{strongly in
      }L^1(\Omega).
    \end{array}
  \end{equation*}
  Moreover, $b_n(x,u_n,Du_n)$ is bounded in $L^1(\Omega)$. Indeed,
  being $\beta$ continuous, using \eqref{eq:uno} of Proposition
  \ref{prop:bound}, we get that
  \begin{multline*}
    \int_\Omega |b_n(x,u_n,Du_n)|dx \le \int_{\{|u_n|\le k \}}
    \beta(u_n) H(Du_n)^q dx + \int_{\{|u_n|>k \}}
    \beta(u_n) H(Du_n)^q dx \\ \le  |\Omega|^{1-\frac qp}\left(
      \int_{\Omega} H(D u_n)^p dx \right)^{\frac qp}  \max_{[0,k]}
    \beta + C k^{-\alpha}.
  \end{multline*}
  Hence, we can apply the compactness result of \cite{bm92}, obtaining
  that $D u_n\rightarrow Du$ a.e. in $\Omega$. Now we prove the strong
  convergence of $b_n(x,u_n,Du_n)$ to $b(x,u,Du)$ in $L^1$. If
  $q=p$, this can be shown by a standard procedure (see for instance
  \cite{lmp} and the references therein). Otherwise, we use the
  Vitali Theorem. The equiintegrability of
  $b_n(x,u_n,Du_n)$ follows observing that, similarly as before,
  \begin{multline*}
     \int_E |T_n(b(x,u_n,Du_n))|dx \le \int_{\{|u_n|\le k \}\cap E}
  \beta(u_n) H(Du_n)^q dx + \int_{\{|u_n|>k \}}
  \beta(u_n) H(Du_n)^q dx \\ \le  |E|^{1-q/p}
  \left( \int_{\Omega} H(D u_n )^p dx \right)^{\frac qp} \max_{[0,k]}
  \beta + C k^{-\alpha} \le C \left( |E|^{1-\frac qp}  \max_{[0,k]}\beta +
    k^{-\alpha} \right).
  \end{multline*}
 Finally, we observe that, recalling \eqref{growth}, $a(x,u_n,Du_n)$
 is bounded in $L^{p'}(\Omega)$, and then it weakly converges to
 $a(x,u,Du)$ in $(L^{p'}(\Omega))^N$. Being $u\in W_0^{1,p}(\Omega)$,
 we can pass to the limit in \eqref{soldeb}, and this
 concludes the proof of part (i).

 Clearly, if $f\in L(m,\sigma)$, $(p^*)'<m<\frac N p$ and
 $\max\{1,\frac{1}{p-1}\}\le \sigma\le
 +\infty$, by Theorem \ref{stime1} the obtained solution $u$ verifies
 the estimate in (ii).
\end{proof}

\begin{proof}[Proof of Theorem \ref{theo:entr}]
  Let $u_n$ be a solution of \eqref{eq:gen-appp}. Then, the estimates
  \eqref{reg} and \eqref{reg2} hold. As matter of fact, for a.e. $t>0$
  we have that
  \[
  - \frac{d}{dt}\int_{\{|u_n|>t\}} H(Du_n)^p dx =
  \frac{d}{dt}\int_{\{|u_n|\le t\}} H(Du_n)^p dx.
  \]
  Hence, applying \eqref{eq:est-grad} to $u_n$, and integrating
  between $0$ and $k$, by \eqref{eq:zero} we have that
  \begin{equation}
    \label{eq:tk}
  \int_\Omega H(D T_k(u_n))^p dx \le C k.
  \end{equation}
  Hence, $T_k(u_n)$ are bounded in $W_0^{1,p}(\Omega)$ and, up to a
  subsequence, $T_k(u_n)\rightharpoonup T_k(u)$ weakly in
  $W_0^{1,p}(\Omega)$. Moreover, $|u|^{p-1}\in L\left(
    \frac{Nm}{N-mp},\sigma \right)$, and $g_n(x,u_n)\rightarrow
  g(x,u)$ strongly in $L^1(\Omega)$.
  Similarly as in the proof of Theorem
  \ref{theo:deb}, by \eqref{eq:uno} we get
  \begin{multline}
    \label{eq:tkg}
     \int_E |b_n(x,u_n,Du_n)|dx \le  |E|^{1-q/p}
  \left( \int_{\Omega} H(D T_k( u_n ) )^p dx \right)^{\frac qp} \max_{[0,k]}
  \beta + C k^{-\alpha} \le \\ \le C \left( |E|^{1-\frac qp} k^{\frac
      q p} \cdot \max_{[0,k]}\beta + k^{-\alpha} \right),
\end{multline}
where last inequality follows from \eqref{eq:tk}. Then, \eqref{eq:tkg}
gives that $b_n(x,u_n,Du_n)$ is bounded in $L^1(\Omega)$, and by
the compactness result contained in \cite{bg92} (see also
\cite{dv95}), $Du_n\rightarrow Du$ a.e. in $\Omega$ (up to a
subsequence). If $q<p$, the Vitali Theorem assures the strong
convergence of $b_n(x,u_n,Du_n)$ to $b(x,u,Du)$ in $L^1(\Omega)$,
and the strong convergence of $T_k(u_n)$ to $T_k(u)$ in
$W_0^{1,p}(\Omega)$. Otherwise, for $q=p$ this can be shown in a
standard way using a suitable exponential test function (see for
instance \cite{lmp}, and the reference therein). Hence, we can pass to
the limit in the right-hand side of
\begin{equation}
  \label{defsol-entr2}
  \int_\Omega a(x,u_n,Du_n)\cdot D T_k(u_n-\varphi) \, dx =
  \int_\Omega [ b_n(x,u_n,Du_n) + g_n(x,u_n) + f_n ] T_k (
  u_n - \varphi ) \, dx,
\end{equation}
for all $\varphi\in W_0^{1,p}(\Omega)\cap L^{\infty}(\Omega)$.

As regards the left-hand side of \eqref{defsol-entr2}, a simple
argument based on the Fatou Lemma (see \cite{boc96}) allows to show
that
\[
\int_{\Omega}a(x,u,Du) \cdot D T_k(u-\varphi) dx \le
\liminf_{n\rightarrow +\infty} \int_\Omega a(x,u_n,Du_n)\cdot D
T_k(u_n-\varphi) \, dx.
\]
Then $u$ verifies \eqref{defsol-entr}, and this concludes the proof
of the existence result.

Finally, by Theorem \ref{stime2g} the obtained solution $u$ verifies
 \eqref{estg}.
\end{proof}
\begin{rem}
  \label{remex}
  In order to get an existence and regularity result for $f\in L^m$,
  $1<m<(p^*)'$, we can repeat line by line the above proof using the
  estimates contained in Remark \ref{remstime}.
\end{rem}

\end{document}